\documentclass[reqno]{article}
\usepackage{authblk}
\usepackage{amssymb,latexsym,amsmath,amsfonts}
\usepackage{graphicx}
\usepackage{color,enumerate}
\usepackage{amsmath,amsfonts,amssymb}
\usepackage[mathscr]{eucal}
\usepackage{bbm}
\usepackage{booktabs}
\usepackage{float}
\usepackage{hyperref}
\usepackage{amsthm}
\usepackage[mathscr]{eucal}
\usepackage{paralist}
\usepackage{rotating}
\usepackage{multirow}
\usepackage[left=1.25in, right=1.25in,bottom=1.25in]{geometry}
\newtheorem{theorem}{Theorem}
\newtheorem{lemma}{Lemma}

\theoremstyle{definition}
\newtheorem{definition}[theorem]{Definition}

\newtheorem{example}[theorem]{Example}
\theoremstyle{remark}

\newcommand {\Cov}{\mbox{Cov}}



\DeclareMathOperator{\1}{\mathbbm 1}
\DeclareMathOperator{\R}{{\mathbb R}}                
\DeclareMathOperator{\N}{{\mathbb N}}                
                          %
\renewcommand{\P}{\mathbbm{P}}
\newcommand{\RN}[1]{\uppercase\expandafter{\romannumeral#1}}

\newcommand{\john}[1]{{\color{black}{#1}}}
\definecolor{darkred}{rgb}{0.8, 0, 0}
\newcommand{\Denis}[1]{\textcolor{black}{#1}}

\newcommand{\revision}[1]{\textcolor{black}{#1}}

\newcommand\correspondingauthor{\thanks{Corresponding author.}}
\title{Probabilistic foundations of spatial mean-field models in ecology and applications}
\author[1]{Denis D. Patterson\correspondingauthor}
\affil[1]{Department of Mathematics, Brandeis University (denispatterson@brandeis.edu)}
\author[2]{Simon A. Levin}
\affil[2]{Department of Ecology and Evolutionary Biology, Princeton University (slevin@princeton.edu)}
\author[3]{ A. Carla Staver}
\affil[3]{Department of Ecology and Evolutionary Biology, Yale University (carla.staver@yale.edu)}
\author[1,4]{Jonathan D. Touboul}
\affil[4]{Volen National Center for Complex Systems, Brandeis University (jtouboul@brandeis.edu)}
\date{\today}
\setcounter{Maxaffil}{0}

\footnotetext{This study was partially supported by NSF DMS 1615585 \& 1951358 (SAL), NSF DMS 1615531 \& 1951394 (ACS) and NSF DMS 1951369 (JDT).}
\begin{document}
	
	\maketitle
	\textbf{Update:} This version of the paper was updated in May 2021 to provide more detail around some estimates used in the proofs of Theorems \ref{thm_convergence_Kstates} and \ref{thm.exist_unique_spatial} (see \eqref{eq.exposition}). The authors would like to thank Philippe Robert for his helpful remarks which aided in improving the exposition of the proofs.
	\begin{abstract}
		Deterministic models of vegetation often summarize, at a macroscopic scale, a multitude of intrinsically random events occurring at a microscopic scale. We bridge the gap between these scales by demonstrating convergence to a mean-field limit for a general class of stochastic models representing each individual ecological event in the limit of large system size. The proof relies on classical stochastic coupling techniques that we generalize to cover spatially extended interactions. The mean-field limit is a spatially extended non-Markovian process characterized by nonlocal integro-differential equations describing the evolution of the probability for a patch of land to be in a given state (the generalized Kolmogorov equations of the process, GKEs). We thus provide an accessible general framework for spatially extending many classical finite-state models from ecology and population dynamics. We demonstrate the practical effectiveness of our approach through a detailed comparison of our limiting spatial model and the finite-size version of a specific savanna-forest model, the so-called Staver-Levin model. There is remarkable dynamic consistency between the GKEs and the finite-size system, in spite of almost sure forest extinction in the finite-size system. To resolve this apparent paradox, we show that the extinction rate drops sharply when nontrivial equilibria emerge in the GKEs, and that the finite-size system's quasi-stationary distribution (stationary distribution conditional on non-extinction) closely matches the bifurcation diagram of the GKEs. Furthermore, the limit process can support periodic oscillations of the probability distribution, thus providing an elementary example of a jump process that does not converge to a stationary distribution. In spatially extended settings, environmental heterogeneity can lead to waves of invasion and front-pinning phenomena.
	\end{abstract}
	\newpage
	\section{Introduction}\label{sec.intro}
	Bridging the microscopic and macroscopic scales in theoretical ecology is an important challenge. Detailed microscopic models describing single organisms are extremely useful for their realistic interpretations and quantitative match to data but their complexity often precludes a detailed mathematical analysis and obscures the key mechanisms at play \cite{andela2017human,fisher2018vegetation}. Simple models on the other hand allow for an in-depth mathematical understanding, but necessitate simplifications and hypotheses that limit their predictive ability \cite{staver_levin_2012}. Despite these limitations, simple systems based on ordinary differential equations have proven extremely useful in ecology.
	
	We propose to bridge the gap between ecological models with macroscopic viewpoints and microscopic descriptions of stochastic transitions. \john{While the methods presented here are quite general, we apply them to }a vegetation ecosystem and the Staver-Levin model of tropical vegetation cover for definiteness~\cite{staver2011tree,staver_levin_2012,touboul2018complex}. The Staver-Levin model \john{is a system of nonlinear differential equations describing the evolution of the fraction of landscape covered by grass, savanna trees or forest trees. The two tree species differ in their birth and death rates, as well as in the way they are affected by fires. Fires, carried by grass, kill (or burn) forest trees but not savanna trees; adult savanna trees resist fires, while saplings, \emph{top-killed} by fires (i.e. burned but with the ability to resprout later), have their maturation into adult trees delayed. Each species also reproduces and dies with specific rates.} This model displays complex dynamical behaviors, including multistability, limit cycles, and homoclinic and heteroclinic orbits. These complex dynamical structures are affected by the  presence of noise, possibly leading to stochastic resonances for Brownian perturbations~\cite{touboul2018complex}. \john{From the ecological viewpoint, the Staver-Levin model summarizes, at a macroscopic scale, a variety of ``microscopic'' events arising at different spatial locations: seed dispersal leading to the birth of a new tree, growth of a new tree, occurrence of fires, death, etc. These events are intrinsically random, localized, and depend on local interactions (e.g., fire spread or seed dispersal). A model explicitly accounting for these stochastic dynamics is essential to better understand how randomness affects the behavior of these systems, and how dynamical behaviors are impacted by spatial interaction; these are two particularly important issues with regard to understanding ecosystem persistence and the potential impacts of climate change on tropical vegetation distributions.} 
	
	There is an extensive literature regarding limits of interacting particle systems, dating back to the early works of Bernoulli~\cite{bernoulli1738hydrodynamica}, Clausius~\cite{clausius1857art} and the celebrated work of Boltzmann~\cite{boltzmann1871warmegleichgewicht} on the kinetic theory of gases. These works, aimed at relating the movement of molecules in a gas to macroscopic quantities such as pressure, have seen remarkable developments in recent years and have been applied to a variety of models (see, e.g., \cite{cabana2018large,cox2013voter,dai1996mckean,durrett1994particle,neuhauser1994long,sznitman1991topics}). The aforementioned mathematical framework has been widely used to derive and analyze spatially extended models in neuroscience \cite{luccon2014mean,robert2016dynamics,touboul2014propagation,touboul2014spatially} and, to a lesser extent, in ecology \cite{durrett2009coexistence}. The recent works of Durrett and Ma \cite{durrett2018heterogeneous,durrett2015coexistence} are important contributions and are most closely related to the present work. The authors consider a two state version of the Staver-Levin model on a toroidal lattice and by taking an appropriately scaled spatial limit, they show convergence in probability of their interacting particle system to the solution of an integro-differential equation (IDE). They obtain a coexistence result by analyzing the resulting IDE and provide bounds on the coexistence time in terms of the system size. We adopt an alternative approach based on stochastic coupling methods \cite{dobrushin1970prescribing,mckean1966class,sznitman1991topics,touboul2014propagation} that allows us \Denis{to} directly demonstrate convergence of the spatial Markov jump process to a McKean-Vlasov jump process (i.e. a process whose transition rates depend on the law of the process itself). Our framework incorporates nonlocal interactions between a finite number of species, heterogeneity from random initial particle placement, and can be applied to many other models in ecology and population dynamics.
	
	\Denis{We suppose that the locations of the vegetation, which we refer to as sites, are randomly distributed on the domain according to some probability measure $q$ and we consider a scaling limit in which sites are successively added to the domain according to this measure. This general model covers two cases relevant for applications: \revision{(i.) \emph{mesoscale models} which resolve the fine spatial structures of the interactions while still taking into account collective effects ($q$ is a continuous distribution), and (ii.) \emph{macroscale models} which lose the fine spatial organization of the interaction between sites in favor of collective stereotyped interactions between a number of isolated vegetation populations ($q$ is a finite combination of Dirac masses). In this way, the macroscale framework is reminiscent of a metapopulation or network model.} 
	}
	
	\john{Within a unified framework \Denis{which encapsulates the Staver-Levin model and its variants}, we show convergence of a class of spatial particle systems to their \emph{mean-field limit},  a non-Markovian stochastic process.} \john{The mean-field limiting process will be characterized by integro-differential equations on the probability density of the process that will be referred to as \emph{generalized Kolmogorov equations} (GKEs). Similar to classical Kolmogorov equations for Markov processes, these equations govern the evolution of the probability distribution of the process. However, contrasting with classical Kolmogorov equations, the GKEs associated with the mean-field limit are nonlinear and, in the mesoscale model, nonlocal}. Our spatial extensions of the Staver-Levin model permit analysis of qualitative properties of practical interest which are not easily accessible via probabilistic methods. For instance, techniques developed to study pattern formation \cite{borgogno2009mathematical,gilad2004ecosystem,gowda2014transitions,lefever1997origin}, wave speeds and invasion phenomena \cite{keitt2001allee,thompson2008plant} and responses to heterogeneous environments \cite{goel2018dispersal,li2019spatial} in spatially extended ecological models are readily applicable to the GKEs. We demonstrate waves of invasion numerically in the \Denis{mesoscale model} (see Figure \ref{fig.maxwell}A) and \Denis{this model} can also naturally incorporate environmental heterogeneity via the choice of the initial site distribution (see Figure \ref{fig.maxwell}B). In fact, in the mesoscale framework, we show that both our particle system and the GKEs of the corresponding mean-field limit can exhibit front pinning, a phenomenon of much practical interest that has previously been found in PDE based ecological models \cite{goel2018dispersal,van2015resilience}.
	
	From an ecological standpoint, further analysis of the models derived in this paper will enable a more thorough understanding of the determinants of the forest-savanna boundary, particularly in the presence of precipitation gradients, resource limitations, and climate change~\cite{li2019spatial,wuyts2018fronts}. Beyond savanna models, the techniques developed here can further be applied to various models. In biology, these techniques could naturally allow addressing stochastic and nonlocal effects arising in other vegetation models (e.g., generic vegetation models~\cite{klausmeier1999regular}), models of infectious diseases~\cite{capasso2008mathematical}, and embryonic development~\cite{quininao2015local} where differentiation of cells depends on non-local signals and gradients. Other perspectives include applications to social models, and variations of voter models~\cite{cox1991nonlinear} with spatial dimension. In the domain of infectious diseases, our framework could be used to study the transmission of an epidemic taking into account the role of spatial positions and nonlocal transmission in a spatially extended SIR-type model~\cite{lloyd1996spatial}. In that regard, it would be of interest to extend our results to individuals moving in space and determine the impact of motility on viral spread, two particularly timely questions with implications for public health policies. 
	
	\Denis{The paper is organized as follows: In Section \ref{sec:SL} we motivate and define a stochastic Staver-Levin model based on microscale events with nonlocal spatial interactions (referred to as the \emph{finite-size stochastic Staver-Levin model} thereafter). We then outline our main mathematical result, Theorem \ref{thm_convergence_Kstates}, in Section \ref{sec:GeneralModel}; this result shows convergence to a well-defined mean-field limiting process for a general class of spatial particle systems and covers the finite-size Staver-Levin model as a special case. In Section \ref{sec.analysis} we compare the qualitative behavior of the finite-size stochastic Staver-Levin model with the GKEs of the corresponding mean-field limits in a series of numerical experiments. We explain long-term transient behaviors via quasi-stationary distributions (QSDs), demonstrate limit cycles in both spatial and nonspatial versions of our models, and show waves of invasion and front pinning. The proof of Theorem \ref{thm_convergence_Kstates} and supporting mathematical preliminaries are deferred to Section \ref{sec.proofs}. Further details of numerical parameters and routines for reproducibility of all figures in the paper are in Appendix \ref{sec.appendix}.}
	\section{Stochastic Models of Tropical Vegetation Dynamics}\label{sec.multiscale}
	\subsection{Microscopic stochastic Staver-Levin model}\label{sec:SL}
	The Staver-Levin model describes the interaction between savanna trees, forest trees, and grass patches. In this model, grass represents an ``open'' patch in which new trees can grow, but also carries fires that limit the expansion of both savanna and forest trees in distinct manners. At the scale of individual trees and patches of grass, the  model relies on a few elementary assumptions (see Fig.~\ref{fig:Model}):
	\renewcommand{\theenumi}{\roman{enumi}}
	\begin{enumerate}[(i.)]
		\item Forest trees grow on ``open'' patches (currently occupied by grass, saplings or savanna trees), at a rate associated with the amount of seeds available on that open patch, therefore related to the density of trees in its vicinity.
		\item Similarly, savanna trees grow on grass occupied patches at a rate that \Denis{depends} on the local density of adult savanna trees.
		\item Grass carries fires, killing forest trees and delaying the maturation of savanna saplings to adults\footnote{Later versions of the Staver-Levin model additionally allow savanna trees and saplings to be somewhat flammable; we omit this extension here for ease of exposition (cf. \cite{touboul2018complex}) but our methods readily allow the incorporation of this extra feature.}.
	\end{enumerate}
	\begin{figure}
		\centering
		\includegraphics[width=\textwidth]{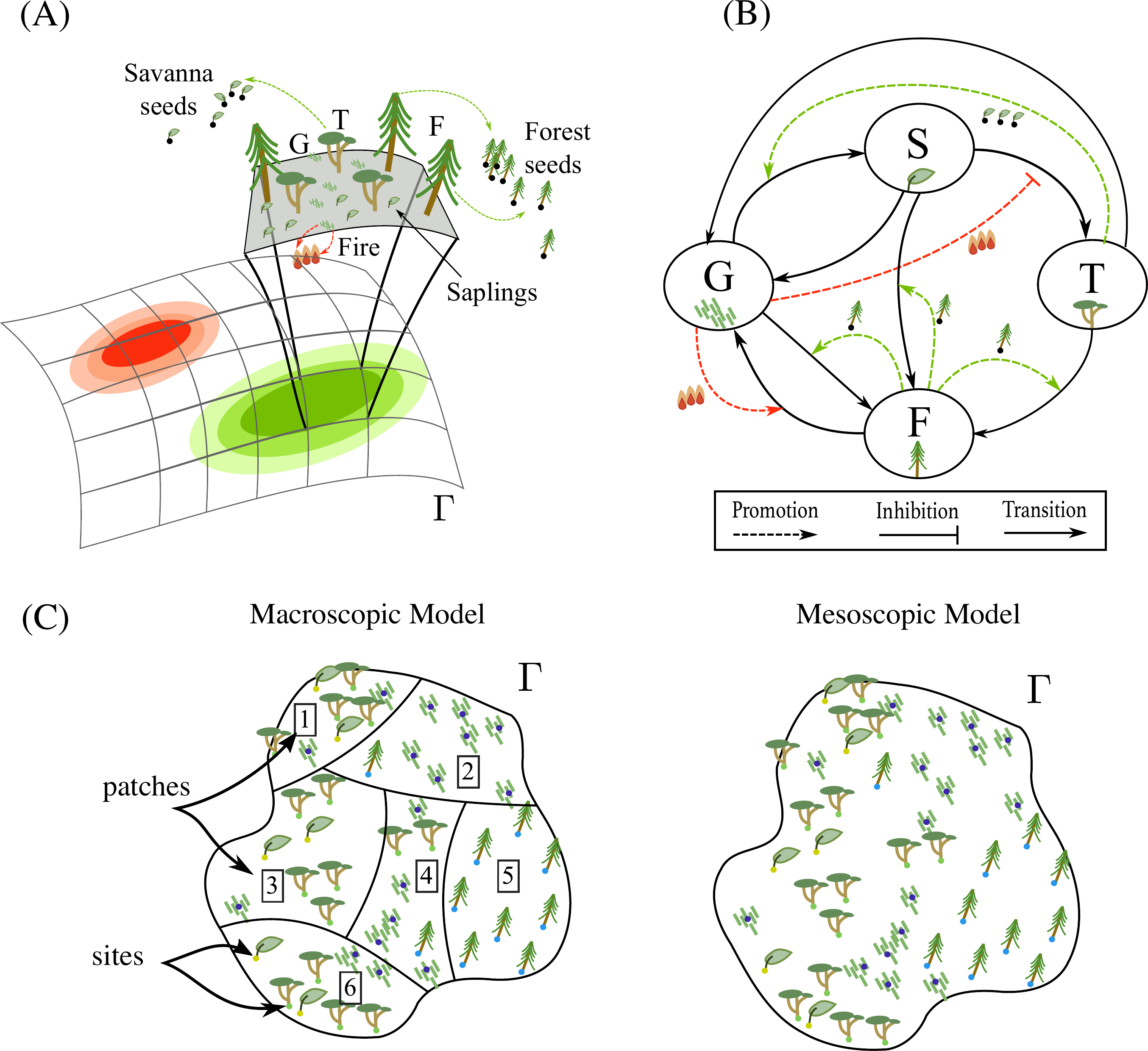}
		\caption{(A) Schematic diagram showing the basic events modeled, on a landscape $\Gamma\subset \R^2$: forest trees $F$ (here, represented as pine trees for the sake of legibility of the diagram, although rainforest trees are not of the coniferous family) and savanna trees $T$ disperse seeds carried by the wind, therefore at a limited reach (green ellipses), growing new forest trees or savanna saplings $S$; land occupied by no tree is by default covered by grass. Fires, essentially carried by grass, ignite depending on the local density of grass (red ellipses), increase the mortality rate of forest trees, and reduce the rate at which savanna saplings mature into adult trees. (B) Simplified interaction diagram between states in the model (see text). (C) Sample model configurations in the macroscopic model with $6$ patches and $8$ sites per patch (left), or for the mesoscopic model with randomly located sites (right).}
		\label{fig:Model}
	\end{figure}
	While the original Staver-Levin model does not explicitly consider spatial effects, each of these interactions depend on the density and locations of patches of grass or trees, and extensive remote sensing data highlights the spatial organization at macroscopic scales of savannas and forest lands~\cite{staver2011tree,staver2019spatial}. 
	
	To account for spatial interactions, we consider a landscape $\Gamma$, a \Denis{Borel set} in $\mathbb{R}^2$. On this landscape, multiple \emph{sites}, thought of as small spatial areas of the typical size of a single tree (\emph{microscopic scale}), allow growth of new trees.  We suppose that $N$ sites are distributed on the landscape $\Gamma$ at locations $(r_i)_{i \in \{1,\dots,N\}}\in \Gamma^N$. The locations are assumed to be independent and identically distributed according to a \Denis{probability measure $q$ on $\Gamma$}. \Denis{A uniform distribution on $\Gamma$ is the most natural choice for the $r_i$'s if we are modeling a homogeneous landscape but certain heterogeneous environmental features may favor more or less vegetative growth in certain parts of the domain (e.g. soil quality/texture) and this may motivate other choices. If $q$ is a continuous probability measure supported on all of $\Gamma$, then we obtain a \emph{mesoscale model} in the limit as $N\to \infty$, the entire domain is populated with sites and interactions depend on the precise location of each site (Fig.~\ref{fig:Model} (C), right). Some applications emphasizing the interaction between multiple regional covers or populations may motivate choosing a discrete measure for $q$ with a finite number of locations\revision{, referred to as patches} in this context. In this case, we retain a discrete spatial structure with multiple interacting populations located at each patch but the number of sites in each patch will still tend to infinity as $N \to \infty$ (see Fig.~\ref{fig:Model} (C), left). \revision{This will be referred to as the \emph{macroscale model}.}}
	
	Each site may switch state at random times with a stochastic intensity that depends on the state of other sites. We denote by $X^i(t)$ the state of site $i$ at time $t$, and label $G$ the grass state, $S$ the savanna sapling state, $T$ the adult savanna tree state and $F$ the forest tree state. The transitions of a given site between states are parameterized as follows (see Fig.~\ref{fig:Model} (A--B)):
	\begin{enumerate}[(i.)]
		\item \Denis{A savanna sapling grows from a site $i$ currently covered by grass with a rate depending on the total amount of savanna tree seeds available at location $r_i$. We thus introduce a savanna seed dispersal kernel, denoted $J_S(r,r')$, corresponding to the rate at which a savanna seed travels from $r'$ to $r$, yielding a transition rate from grass to sapling at site $i$ given by:}
		\[
		\frac{1}{N} \sum_{j = 1}^N J_S(r_i,r_j) \mathbbm{1}_{\{X^j(t) = T\}}
		\]
		where \Denis{$\mathbbm{1}_{\{X^j(t) = T\}}$} is the indicator function, equal to $1$ when \Denis{$X^j(t)=T$} and $0$ otherwise. The transition rate is renormalized by the total number of sites to ensure that it remains within fixed \Denis{bounds} as $N$ is varied --- this scaling can also be considered as a scaling of time. 
		
		\item Similarly, a forest tree grows from a site $i$ currently covered by grass, sapling or savanna tree (owing to the assumption that forest trees are competitively dominant to grass and savanna trees alike) with a rate depending on the total amount of seeds available. The number of forest tree seeds available at location $r_i$ depends on a forest \emph{seed dispersal} kernel $J_F(r,r')$, yielding the transition rate:
		\[
		\frac{1}{N} \sum_{j = 1}^N J_F(r_i,r_j) \mathbbm{1}_{\{X^j(t) = F\}}.
		\]
		\item Forest trees die and are replaced by grassy patches either due to fires, fueled by grass, \Denis{or background non-fire mortality}. The rate at which site $i$ of forest type switches to grass thus depends on the flammable cover available in the vicinity of site $i$ and their capacity to transmit fires; this transition rate is given by:
		\[
		\phi\left(\frac{1}{N} \sum_{j = 1}^N W(r_i,r_j) \mathbbm{1}_{\{X^j(t) = G\}}\right).
		\]
		\Denis{The \emph{forest mortality function} $\phi$ is a smooth, sharply increasing sigmoidal function of the flammable cover affecting a site at location $r_i$. The map $\phi$ incorporates baseline mortality of forest trees $\phi(0)>0$. More detailed models of fire propagation in savannas have modeled fire \revision{propagation using percolation,} and previous studies have shown that fire spread in these models has a sharp sigmoidal profile as a function of the flammable cover~\cite{schertzer2015implications}. Our approach regarding fire propagation thus remains phenomenological and aims to capture the qualitative character of fire spread while retaining analytic tractability.}
		
		\item Savanna saplings mature into adult savanna trees. The associated rate of transition is affected by fires that, by top-killing saplings, delay their maturation, and the probability of being affected by a fire depends, again, on the local flammable cover and their capacity to transmit fires. The maturation rate of a sapling into an adult savanna tree is thus given by:
		\[
		\omega\left( \frac{1}{N} \sum_{j = 1}^N W(r_i,r_j) \mathbbm{1}_{\{X^j(t) = G\}} \right).
		\]
		where $\omega$ is a sigmoid function (smooth, decreasing and bounded) quantifying the delayed maturation associated with top-killed saplings. \Denis{Once more, the sigmoidal form of $\omega$ aims to capture the qualitative properties of fire spread without explicitly modeling the underlying fire process.}
		\item Savanna saplings and trees die at constant rates, independent of the state of the system, denoted by $\mu$ and $\nu$ respectively. 
	\end{enumerate}
	\Denis{The transitions for the \emph{finite-size stochastic Staver-Levin model} outlined above are summarized in Table~\ref{tab:table1} (along with the corresponding rates for the mean-field limiting process). }
	\begin{table}
		\begin{center}
			\begin{tabular}{@{}cccc@{}}
				\textbf{Transition}              &       & \textbf{Rate in finite-size system} & \textbf{Rate in mean-field limit} \\ \toprule
				$G \to S$ &      &   $\displaystyle{\frac 1 N \sum_{j = 1}^N J_S(r_i,r_j)\mathbbm{1}_{\{X^j(t) = T\}}}$    & $\int_{\Gamma}J_S(r_i,r')P_T(t,r')\,dq(r')$     \\ \midrule
				$G, S, T \to F$ &    &    $\displaystyle{\frac 1 N\sum_{j = 1}^N J_F(r_i,r_j)\mathbbm{1}_{\{X^j(t) = F\}}}$  & $\int_{\Gamma}J_F(r_i,r')P_F(t,r')\,dq(r')$     \\ \midrule
				$F \to G$ &      &     $\displaystyle{\phi\left(\frac{1}{N} \sum_{j = 1}^N W(r_i,r_j) \mathbbm{1}_{\{X^j (t) = G\}}\right)}$   & $\displaystyle{\phi\left( \int_{\Gamma} W(r_i,r') P_G(t,r')\,dq(r') \right)}$    \\ \midrule
				$S \to T$ &      &     $\displaystyle{\omega\left( \frac 1 N \sum_{j=1}^N W(r_i,r_j) \mathbbm{1}_{\{X^j (t) = G\}} \right)}$   & $\displaystyle{\omega\left(\int_{\Gamma} W(r_i,r') P_G(t,r')\,dq(r')\right)}$     \\ \midrule
				$S \to G$ &     &      $\mu$  & $\mu$  \\ \midrule
				$T \to G$ &     &     $\nu$    & $\nu$    \\ \bottomrule
			\end{tabular}
			\vspace*{5pt}
			\caption{Transition rates in the finite-size stochastic Staver-Levin model (column 2) and the corresponding transition rates of the mean-field limiting process (column 3). $P_x(t,r)=\mathbb{P}[\bar{X}(t,r) = x]$ denotes the probability that the mean-field process $\bar{X}$ is in state $x \in \{G,S,T,F\}$ at time $t$ and location $r$.} 
			\label{tab:table1}
		\end{center}
	\end{table}
	\Denis{The study of the finite-size model defined above is particularly simple. Indeed, these systems are continuous-time finite-state Markov processes (with $4^N$ possible states for the system) with bounded transition rates. Due to the large dimension of the process, the Kolmogorov equations are impractical to write and solve, but elementary considerations reveal that the system will always tend to the all-grass state. The state where all sites are grass is an absorbing state, since the transition rates from that state to saplings or trees are equal to $0$ when no forest or savanna trees are present in the system. Moreover, it is easy to see that each state can reach the all-grass absorbing state in a finite number of steps, with a rate bounded away from $0$. A classical result of Markov processes thus ensures that, with probability 1, the Markov chain is absorbed in finite time by the all-grass state. Therefore, the stationary dynamics of the finite-size system will always be trivial~\cite{darroch1967quasi}. This trivial stationary solution sharply contrasts with the complex dynamics of the Staver-Levin model~\cite{touboul2018complex}. We will thus pay particular attention to the transient dynamics and their duration in Section~\ref{sec.multi_comparison}, two elements of particular interest in ecological modeling that have garnered increasing research interest in recent years~\cite{hastings2018transient}.}
	
	\subsection{A General Convergence Result}\label{sec:GeneralModel}
	\john{The finite-size stochastic Staver-Levin model is one exemplar of a wide class of models arising in ecology and physics in which a large number $N$ of sites (agents, particles, etc.) with fixed spatial locations interact according to the state of their neighbors. Mathematically, we will study a general class of Markovian models where each of the $N$ sites $i\in\{1,\dots,N\}$ can be in one of $K$ states. Each site $i\in\{1,\dots,N\}$ is located at an i.i.d. spatial position $r_i$ drawn from a probability measure $q$ on $\Gamma \in \mathcal{B}(\R^2)$ (where $\mathcal{B}(\cdot)$ denotes the Borel $\sigma$-algebra on a given set). The state of each site at time $t\in\R_+$ is denoted $X^i(t)$, and the set of possible states is denoted $S^K$ and has $K$ elements (in the Staver-Levin model, these are the vegetation types $\{G,S,T,F\}$). The initial state of \revision{each site is independent of that of all other sites and each sites initial state has the same space-dependent law satisfying the following regularity condition:}
		\begin{description}
			\item[$H_{IC}.$] The initial distribution for site $i$ is given by $\xi^i(r_i)$ where the collection of processes $(\xi^j(r))_{j\in \N}$ are independent random variables indexed by space, with probabilities measurable with respect to space. Rigorously, we assume that their distribution is given by a Markov kernel $\mu_0: \Gamma \to \Omega$ with $(\Omega,\mathcal{F},\P)$ the current probability space\footnote{A Markov kernel (see, e.g.~\cite[Chap. 3, Def. 1.1]{revuz2013continuous}) with source $(X,\mathcal{A})$ and target $(\Omega,\mathcal{F})$, denoted $\kappa:X\to \Omega$, is a map $\kappa:X \times \Omega \mapsto [0,1]$ such that:
				\renewcommand{\theenumi}{\roman{enumi}}
				\begin{enumerate}[(i.)]
					\item for every $F\in\mathcal{F}$, the map $x\mapsto \kappa(x,F)$ is $\mathcal{A}$-measurable
					\item for every $x\in A$, the map $\kappa(x,\cdot)$ is a probability measure.
				\end{enumerate} 
				In other words, $\kappa$ associates to every point $x$ a probability measure such that for every measurable set of the probability space, the probability $x\mapsto \kappa(B,x)$ is measurable.}.
		\end{description}
		
		\revision{Following a classical convention, we consider càdlàg versions of our jump processes (i.e., right-continuous with left limits everywhere), and we denote by $X^i(t^-)$ the left-limit of the state of site $i$ at time $t$. The transition rate for site $i$ at location $r_i$ and between any two states $x$ and $y\neq x$ depends on the local densities of some other species at a given time $t$. This rate is thus time dependent, in that it varies with the state of the system. Between the last transition before time $t$ and time $t$, the transition rate is therefore constant (since no site has changed state) and is assumed to be given by nonlinear functions of a weighted sum of other sites' states:}
		\begin{equation}\label{eq:RatesGeneral}
		R_{x,y}^{i,N}\left(X(t^-)\right) = \Phi_{x,y}\left( \frac{1}{N}\sum_{j = 1}^N W_{x,y} \left(r_i,\,r_j \right)\1_{ \{X^j(t^-) = \psi(x,y)\} } \right),
		\end{equation}
		where the kernels $\left\{W_{x,y} :\,(x,y) \in S^K \times S^K \right\}$ weight the nonlocal influence of other sites on the transition rate from state $x$ to state $y$ at a given site. Here, $\Phi_{x,y}:\R_+\mapsto \R_+$ are smooth (nonlinear) functions and the maps $\psi(x,y): S^K \times S^K \mapsto S^K \cup \{\emptyset\}$ are the states the transition $x\to y$ depends on.  For definiteness, state $\emptyset$ is a state not belonging to $S^K$, chosen if the transition rate is independent of any other state, e.g. when the transition cannot occur. For simplicity, we assume that transitions only depend on one other species, but it would be straightforward to generalize to transitions depending on multiple species by making the functions $\Phi_{x,y}$ multivariate. In the 4-species Staver-Levin model, the functions $\Phi_{x,y}$ and $\psi$ are given by:
		\begin{table}[H]
			\centering
			\begin{tabular}{l|c|c|c|c}
				$\Phi_{x,y}(u)$ & $x=G$ & $ S$ & $ T$ & $ F$	 \\ \hline
				$y =G$ & $0$ & $\nu$ & $\mu$ & $\phi(u)$	 \\ \hline
				$y =S$ & $u$ & $0$ & $0$ &	$0$ \\ \hline
				$y =T$ & $0$ & $\omega(u)$ & $0$ & $0$ \\ \hline
				$y =F$ & $u$ & $u$ & $u$ & $0$
			\end{tabular}\qquad 
			\begin{tabular}{l|c|c|c|c}
				$\psi(x,y)$ & $x = G$ & $S$ & $ T$ & $F$	 \\ \hline
				$y = G$ & $\emptyset$ & $\emptyset$ & $\emptyset$ & $G$	 \\ \hline
				$y = S$ & T & $\emptyset$ & $\emptyset$ &	$\emptyset$ \\ \hline
				$y = T$ & $\emptyset$ & $G$ & $\emptyset$ &	$\emptyset$ \\ \hline
				$y = F$ & $F$ & $F$ & $F$ & $\emptyset$
			\end{tabular}
		\end{table}
		\noindent We assume further that:
		\renewcommand{\theenumi}{H\arabic{enumi}}
		\begin{enumerate}
			\item\label{eq.Lipschitz} $\Phi_{x,y} : \mathbb{R}_+ \mapsto \mathbb{R}_+$ is Lipschitz continuous for each $(x,y)$.
			\item\label{eq.bounded_measurable} $W_{x,y} : \Gamma \times \Gamma \mapsto \mathbb{R}_+$ is bounded and Borel-measurable for each $(x,y)$.
		\end{enumerate}
		\begin{theorem}\label{thm_convergence_Kstates} 
			Consider the Markov process with rates given by~\eqref{eq:RatesGeneral} with initial conditions satisfying assumption $H_{IC}$ and rates given by equation \eqref{eq:RatesGeneral} satisfying assumptions H1 and H2.
			
			\textbf{I. Convergence.} For any time $\tau>0$ and any site $i\in\mathbb{N}$, the process $X^{i} = \{X^{i}(t):  {t\in [0,\tau]}\}$ converges in law, as $N\to \infty$, to the process $\bar{X}(r_i) = \{\bar{X}(t,r_i): {t\in [0,\tau]}\}$ where $\bar{X}(t,r)$ is the unique strong solution to the \revision{spatially extended} McKean-Vlasov jump process with transitions occurring at independent, exponentially distributed times with rates given by:
			\begin{equation}\label{eq:MeanFieldMeso_Kstates}
			x \to y \qquad \Phi_{x,y}\left( \int_\Gamma W_{x,\,y} \left(r,\,r'\right)\mathbb{P}\left[\bar{X}(t^-,r') = \psi(x,\,y) \right]dq(r') \right),
			\end{equation}	
			for each pair of states $(x,y) \in S^K \times S^K$ and initial condition with law $\mu_0$.
			
			\textbf{II. Characterization of the limit.} Define $P_{x}(t,r) = \mathbb{P}[ \bar{X}(t,r) = x]$ for each $x \in S^K$. The probability distribution of the process $\bar{X}$ satisfies the system of generalized Kolmogorov equations (GKEs):
			\begin{multline}\label{eq:IDE_Kstates}
			\partial_t {P}_{x} (r) = \sum_{y\neq x} \Phi_{y,x}\left( \int_\Gamma W_{y,\,x} \left(r,\,r'\right)P_{\psi(y,\,x)}(r')\,dq(r') \right)  P_{y}(r) \\
			-  \sum_{y\neq x} \Phi_{x,y}\left( \int_\Gamma W_{x,\,y} \left(r,\,r'\right)P_{\psi(x,\,y)}(r')\,dq(r') \right) P_{x}(r), \quad x \in S^K,
			\end{multline}
			where $P_{y}(r)$ is shorthand for $P_{y}(t^-,r)$ for each $y\in S^K$ and each $(t,r) \in \mathbb{R}_+ \times \Gamma$.
			
			\textbf{III. Propagation of Chaos.} Moreover, any fixed finite subset of sites \\ $(X^{i_1},\dots,X^{i_p})$ converge towards independent variables, i.e. for any collection of $p$ states  $(x_1,\cdots,x_p)$,
			\[ \lim_{N\to\infty}\P[X^{i_1}(t)=x_1,\dots, X^{i_p}(t)=x_p] = \P[\bar{X}(t,r_{i_1})=x_1] \times \cdots \times \P[\bar{X}(t,r_{i_p})=x_p],\]
			for each $t \in [0,\tau]$.
		\end{theorem}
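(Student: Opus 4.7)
The plan is to prove all three parts by a stochastic coupling argument in the spirit of Sznitman and Dobrushin, adapted to the spatial setting. I would represent both the finite-size process $(X^i)_{i=1}^N$ and a candidate mean-field process on a common probability space by attaching to each site $i \in \N$ and each ordered pair $(x,y) \in S^K \times S^K$ an independent Poisson random measure $\mathcal{N}^i_{x,y}$ on $\R_+\times \R_+$ with Lebesgue intensity: $X^i$ performs the transition $x\to y$ at time $t$ precisely when $X^i(t^-)=x$ and $\mathcal{N}^i_{x,y}$ has an atom $(t,u)$ with $u \le R^{i,N}_{x,y}(X(t^-))$. Conditional on the locations $(r_j)_j$, I would then define coupled candidate trajectories $\bar X^i := \bar X(\cdot, r_i)$ driven by the \emph{same} PRMs but with intensities obtained by replacing the empirical average in \eqref{eq:RatesGeneral} by its limiting spatial integral, i.e. the right-hand side of \eqref{eq:MeanFieldMeso_Kstates} evaluated at $r=r_i$. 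Note that $\Phi_{x,y}$ is bounded on the range $[0,\|W_{x,y}\|_\infty]$ by \ref{eq.Lipschitz} and \ref{eq.bounded_measurable}, so all rates appearing in the construction are genuinely bounded.

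Existence and uniqueness of $\bar X$ must be established first, since the rates depend on the law of the process itself. I would set up a Picard fixed-point problem on the space of measurable maps $(t,r)\mapsto (P_x(t,r))_{x\in S^K}$ valued in the probability simplex on $S^K$ and equipped with the supremum norm on $[0,\tau]\times \Gamma$. Under \ref{eq.Lipschitz} and \ref{eq.bounded_measurable}, the map sending such a collection to the one-dimensional marginals of the associated jump process started from $\mu_0$ is Lipschitz in its argument with a constant that is linear in $t$, so Picard iteration (iterated sufficiently to kill the Lipschitz constant, or equivalently using a Bielecki-type exponentially weighted norm) yields a unique fixed point on $[0,\tau]$; the process $\bar X$ is then constructed pathwise from this fixed point using the PRMs.

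The heart of the argument is bounding $p^{i,N}(t) := \P[X^i(s) \neq \bar X^i(s) \text{ for some } s \le t]$. From the PRM representation, disagreement requires a PRM atom falling into the symmetric difference of the two intensity regions, so Lipschitzness of $\Phi_{x,y}$ gives
\[
p^{i,N}(t) \le C \sum_{x,y} \int_0^t \mathbb{E}\!\left| \frac{1}{N}\sum_{j=1}^N W_{x,y}(r_i,r_j)\mathbbm{1}_{\{X^j(s^-)=\psi(x,y)\}} - \int_\Gamma W_{x,y}(r_i,r')P_{\psi(x,y)}(s^-,r')\,dq(r')\right|ds.
\]
I would split the integrand into (a) an empirical-coupling error replacing each $X^j$ by $\bar X^j$, bounded by $\|W_{x,y}\|_\infty \cdot N^{-1}\sum_j p^{j,N}(s)$; and (b) a law-of-large-numbers error comparing the empirical average over $(\bar X^j, r_j)$ with its $q$-expectation. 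Conditional on $r_i$, the summands in (b) are i.i.d.\ and uniformly bounded by $\|W_{x,y}\|_\infty$, so a direct $L^2$ computation yields $O(N^{-1/2})$ uniformly in $r_i$. By symmetry in $i$ (after taking expectation over the $r_j$'s), the resulting inequality closes into a Gr\"onwall bound on $\sup_i p^{i,N}(t)$ of order $N^{-1/2}$ uniformly on $[0,\tau]$, establishing part I; propagation of chaos (part III) follows at once because the $\bar X^{i_k}$ are exactly independent conditional on their locations.

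Part II is the most routine step: writing $P_x(t,r) = \mathbb{E}[\mathbbm{1}_{\{\bar X(t,r)=x\}}]$ and applying Dynkin's formula to the indicator under the time-inhomogeneous generator driving $\bar X$, the gain from transitions $y\to x$ and the loss from transitions $x\to y$ produce exactly the two sums in \eqref{eq:IDE_Kstates}. The main obstacle I anticipate is step (b) of the coupling estimate: because the kernel $W_{x,y}(r_i,\cdot)$ depends on the very site $i$ being analyzed, the relevant sum is not a global i.i.d.\ average, and one must argue conditionally on $r_i$ and verify that the $N^{-1/2}$ bound is uniform in $r_i$ before integrating against $q$ — this uses only $\|W_{x,y}\|_\infty<\infty$ from \ref{eq.bounded_measurable}. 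A related technical subtlety is ensuring joint measurability of $P_x(t,r)$ in $(t,r)$ throughout, which should be built directly into the fixed-point space in the uniqueness step.
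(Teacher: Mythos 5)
Your proposal is correct and follows essentially the same route as the paper: a Poisson-random-measure representation, a Picard fixed-point argument for well-posedness of the McKean--Vlasov limit, a coupling of each site to a mean-field copy driven by the same PRMs with the rate discrepancy split into an exchangeability-controlled coupling error plus a conditional-on-$r_i$ law-of-large-numbers error of order $N^{-1/2}$, a Gr\"onwall closure, and the GKEs read off from the generator of the associated time-inhomogeneous jump process. The minor differences (iterating on the flow of marginals rather than on processes in $\mathcal{M}_T$, and phrasing the coupling bound via the first-disagreement probability) are cosmetic and do not change the substance of the argument.
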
 
		
		Theorem \ref{thm_convergence_Kstates} demonstrates that the Markov process given by \eqref{eq:RatesGeneral} is well-approximated in the large $N$ limit by the stochastic jump process $\bar{X}$, which we refer to as the mean-field limit. $\bar{X}$ is not a Markovian jump process: the rate of transition depends explicitly on the law of the process $\bar{X}$, and not only on its current state. These types of processes are referred to as McKean-Vlasov processes and it is generally hard to characterize properties of their solutions~\cite{villani2002review}. However, in this case, Theorem \ref{thm_convergence_Kstates} provides a simple characterization of the solution in the form of a system of 
		\begin{itemize}
			\item integro-differential equations (IDEs) when $q$ is a continuous distribution \\(mesoscopic model), or 
			\item nonlinear ordinary differential equations when $q$ is a finite combination of Dirac masses (macroscale model).
		\end{itemize} 
		We refer to the system of equations given by \eqref{eq:IDE_Kstates} as the  \emph{generalized Kolmogorov equations} (GKEs) for $\bar{X}$ because, similar to classical Kolmogorov equations for Markovian processes, they describe the time evolution of the probability density of the continuous-time finite-state jump process $\bar{X}$. However, they are not standard Kolmogorov equations, since $\bar{X}$ is non-Markovian -- this is due to the dependence of $\bar{X}$'s transition rates on the probability density of the process itself. This dependence makes the GKEs nonlinear, contrasting with classical Kolmogorov equations. The GKEs are also more analytically tractable than the finite-size Markov process \eqref{eq:RatesGeneral} or even than the linear Kolmogorov equations associated to the finite-size system, since these equations are very high dimensional (associated with a $K^N$ system of linear differential equations). Furthermore, the GKEs naturally satisfy a conservation law that was included in the original Staver-Levin model~\cite{staver_levin_2012}, i.e.
		\begin{equation}\label{prob_constraint}
		\sum_{x\in S^K} {P}_x(t,r)=1, \quad\mbox{for each }r \in \Gamma \quad\text{and } t\in [0,\tau].
		\end{equation}
		
		The proof of convergence in Theorem \ref{thm_convergence_Kstates} uses a coupling method consisting of two steps. First, we prove strong existence and uniqueness of solutions to the associated mean-field equation given by \eqref{eq:MeanFieldMeso_Kstates}. Second, we construct a particular solution $\bar{X}^i$ of the limiting equation having the same initial condition and coupling the jump times with those of the finite system, to which the process  $\{X^{i}(t),\, t\in [0,\tau] \}$ converges almost surely. The rate of convergence is also quantified through this coupling argument, and it is shown that the distance between the finite system and their limit decays as $1/\sqrt{N}$. 
		
		The coupling methods we employ are classical and have traditionally been applied to high dimensional Markov processes, often with multiple classes, but typically without spatial structure~\cite{aghajani2018large,dawson1991law,dawson2005balancing,feng1992solutions,graham2009interacting}. It is only recently that we  extended these methods to spatial systems with uncountable state-spaces (i.e. interacting particle systems driven by independent Brownian motions)~\cite{touboul2014propagation,touboul2014spatially}. Some recent works have addressed the question of mean-field convergence with spatial structure but only for nearest neighbor interactions~\cite{thai2015birth}. The Staver-Levin model specifically motivates the extension of these methods to finite-state continuous-time Markov processes with nonlocal interactions, due to the nonlocal and long-range nature of seed dispersal~\cite{clark1999seed,thompson2008plant}. 
		
		A direct application of Theorem~\ref{thm_convergence_Kstates} shows that the stochastic Staver-Levin model with rates given by Table \ref{tab:table1} converges towards a non-Markovian spatially extended McKean-Vlasov jump process with transition rates given by:
		\begin{equation}\label{eq:MeanFieldMeso}
		\begin{cases}
		G \to S &\quad \int_{\Gamma} J_S(r,r')P_T(t,r')\,dq(r')\\
		G, S, T \to F &\quad \int_{\Gamma}J_F(r,r')P_F(t,r')\,dq(r')\\
		F \to G &\quad \phi\left( \int_{\Gamma} W(r,r') P_G(t,r')\,dq(r') \right) \\
		S \to T &\quad \omega\left(\int_{\Gamma} W(r,r') P_G(t,r')\,dq(r') \right)\\
		S \to G &\quad \mu\\
		T \to G &\quad \nu
		\end{cases}
		\end{equation}
		In this case, the GKEs of the mean-field limiting process with transition rates given by \eqref{eq:MeanFieldMeso} are:
		\begin{equation}\label{eq:IDE}
		\begin{cases}
		\partial_t {P}_G (r) &= \mu {P}_S(r)+\nu {P}_T(r) +\phi\left( \int_{\Gamma} W(r,r') {P}_G (r')\,dq(r')\,\right){P}_F(r) \\
		&\qquad - {P}_G(r) \int_{\Gamma} J_S(r,r'){P}_T (r')\,dq(r')-{P}_G(r) \int_\Gamma J_F(r,r'){P}_F(r')\,dq(r')\\ \\
		\partial_t {P}_S(r) &= {P}_G(r) \int_{\Gamma} J_S(r,r'){P}_T(r')\,dq(r')-{P}_S(r)\int_{\Gamma} J_F(r,r'){P}_F(r')\,dq(r')\\
		&\qquad - {P}_S(r)\,\omega\left( \int_{\Gamma} W(r,r') {P}_G(r')\,dq(r') \right) -\mu {P}_S(r) \\ \\
		\partial_t {P}_T(r) &= -\nu {P}_S(r) -{P}_T(r)\int_{\Gamma} J_F(r,r'){P}_F(r')\,dq(r')\\ &\qquad + {P}_S(r)\,\omega\left( \int_{\Gamma} W(r,r') {P}_G(r')\,dq(r')\right)\\ \\
		\partial_t {P}_F(r) &= (1-{P}_F(r))\int_\Gamma J_F(r,r'){P}_F(r')\,dq(r')\\ &\qquad - \phi\left( \int_\Gamma W(r,r') {P}_G(r')\,dq(r')\right){P}_F(r),\\
		\end{cases}
		\end{equation}
		
		Theorem \ref{thm_convergence_Kstates} thus provides a natural candidate for a deterministic spatially extended Staver-Levin model, distinct from classical extensions generally done for ecological models and relying on diffusion operators~\cite{murray2004mathematical}. Our framework also allows consideration of different situations than those covered by Durrett and collaborators~\cite{durrett2009coexistence,durrett2018heterogeneous,durrett1994particle}, in particular regarding arbitrary site distributions and arbitrary smooth kernels and spatial dependence, although our approach is limited to smooth dependence on the states (Lipschitz assumptions H1 and H2). Furthermore, owing to their relationship with a fine microscopic model, the integro-differential equations \eqref{eq:IDE} are ecologically relevant: the spatial integral terms can be readily interpreted as nonlocal effects of seed dispersal and fire propagation. 
		\begin{example}[Macroscale Vegetation Model and the Staver-Levin model as a metapopulation or network model]\label{eg.macroscale}
			Macroscale models correspond to cases where the landscape is described as composed of a finite number $M\in\mathbb{N}$ of patches $j \in \{1,\dots ,M\}$. This case is treated considering that each patch is labeled through a location variable in the space $\mathcal{A}_M := \{1,\dots,M\}$. Now suppose that $q=(q_{j})_{j=1,\dots, M}$ is a probability measure on $\mathcal{A}_M$, and denote by ${P}_x^{j}$ the probability that the population in patch $j$ is in state $x\in\{G,S,T,F\}$. By Theorem \ref{thm_convergence_Kstates}, these probabilities satisfy the following GKEs in the limit as $N\to \infty$:
			\begin{equation}\label{eq.IDE_macroscale}
			\begin{cases}
			\frac{d {P}_G^{j}}{dt}  &= \mu {P}_S^{j}+\nu {P}_T^{j} +\phi\left( \tfrac{1}{M}\sum_{k=1}^M q_{k} W(j,k) {P}_G^{k} \right){P}_F^{j} \\
			&\quad - \frac{{P}_G^j}{M} \sum_{k=1}^M q_{k} J_S(j,k) {P}_T^{k} - \frac{{P}_G^{j}}{M} \sum_{k=1}^M q_k J_F(j,k){P}_F^k\\ \\
			
			\frac{d {P}_S^{j}}{dt} &= \frac{{P}_G^j}{M} \sum_{k=1}^M q_k J_S(j,k) {P}_T^k - \frac{{P}_S(r)}{M}\sum_{k = 1}^M q_k J_F(j,k){P}_F^k\\
			&\quad - {P}_S^j\,\omega\left( \tfrac{1}{M}\sum_{k=1}^M q_k W(j,k) {P}_G^k \right) -\mu {P}_S^j \\ \\
			
			\frac{d {P}_T^{j}}{dt} &= -\nu {P}_S^j - \frac{{P}_T^j}{M}\sum_{k=1}^M q_k J_F(j,k){P}_F^k + {P}_S^j\,\omega\left( \tfrac{1}{M}\sum_{k=1}^M q_k W(j,k) {P}_G^k\right)\\ \\
			\frac{d {P}_F^{j}}{dt} &= \frac{1}{M}(1-{P}_F^j)\sum_{j=1}^M q_k J_F(j,k){P}_F^k - \phi\left( \tfrac{1}{M}\sum_{k=1}^M q_k W(j,k) {P}_G^k\right){P}_F^j,
			\end{cases}
			\end{equation}
			for each $j \in \{1,\dots,M\}$. If $M = 1$ and
			\[
			W(1,1) = 1, \quad  J_S(1,1) = \beta, \quad J_F(1,1) = \alpha,
			\]
			the GKEs are exactly the classical Staver-Levin ODE model~\cite{staver_levin_2012}:
			\begin{equation}\label{eq.Staver_levin_ode}
			\begin{cases}
			\dot{G} &= \mu\,S + \nu\,T + \phi\left( G \right)F - \beta\, G \, T - \alpha\,
			G \,F, \\
			\dot{S} &= \beta\, G\, T - \alpha\,G\,F - S \,\omega\left( G\right) - \mu \,S,\\
			\dot{T} &= - \nu \,S - \alpha\,T\,F + \omega(G)\,S, \\
			\dot{F} &= \alpha\,(1-F)\,F - \phi(G)F,
			\end{cases}
			\end{equation}
			where $G$, $S$, $T$ and $F$ denote the proportions of landscape covered by grass, saplings, savanna trees or forest trees in the original Staver-Levin model, while in our context they represent the probability for a given site to be in one of these states. In other words, the condition imposed in the original Staver-Levin model ensuring that the variables are proportions, i.e.
			\begin{equation}\label{eq.SL_ODE_norm}
			G(t) + S(t) + T(t) + F(t) = 1, \quad \mbox{for all}\quad t \geq 0,
			\end{equation}
			corresponds exactly to the probabilistic constraint \eqref{prob_constraint} obeyed by solutions to the GKEs. Because of the propagation of chaos property (Theorem~\ref{thm_convergence_Kstates} III.), finite subsets of sites become independent in the large $N$ limit and therefore the empirical proportion of landscape covered by a given species on this subset is well approximated by the probability for any given site to be in that state. 
		\end{example}
		\begin{example}[Homogeneous Isotropic Grass-forest model]
			We consider an example of competition between grass and forest (without savanna trees) on a landscape $\Gamma$, a Borel set in $\mathbb{R}^2$. We assume that the landscape is homogeneous (i.e., $q$ is the uniform measure on $\Gamma$). Furthermore, as is typical in applications, we assume $W$ and $J_F$ are radial, i.e. $f(r,r') = \hat{f}(|r-r'|)$ for $f=J_F$ or $W$, and where $|\cdot|$ denotes the Euclidean norm on $\Gamma$. Theorem~\ref{thm_convergence_Kstates} implies that the GKEs associated with the mean-field limit in this case are:
			\begin{equation}\label{eq.IDE_microscale}
			\begin{cases}
			\partial_t {P}_G (r) &= \phi\left( (\hat{W}*P_G) (r)\right){P}_F(r) - {P}_G(r) (\hat{J}_F*P_F)(r)\\
			\partial_t {P}_F(r) &= (1-{P}_F(r))(\hat{J}_F * {P}_F)(r) - \phi\left( (\hat{W} * {P}_G)(r)\right){P}_F(r),\\
			\end{cases}
			\end{equation}
			for each $r \in \Gamma$, with $\left(f * g\right) (r)= \int_{\Gamma} f(\vert r-r'\vert)g(r')\frac{dr'}{Leb(\Gamma)}$ with $Leb(\Gamma)$ denoting the Lebesgue measure of $\Gamma$. The system of equations \eqref{eq.IDE_microscale} is also subject to the constraint:
			\begin{equation*}
			{P}_G(t,r) + P_F(t,r) = 1, \quad\mbox{for each }r \in \Gamma \quad\text{and } t\in [0,\tau]. 
			\end{equation*}
	\end{example}}
	\section{Spatially Extended Staver-Levin Model: Analysis of Solutions}\label{sec.analysis}
	\john{Our finite-size Staver-Levin models are Markov processes with an absorbing (or trapping) state that can be reached from any other state. Therefore, the unique stationary solution for these processes is the all-grass state, and, with probability 1, this state is reached in finite time. However, this absorption can arise after long transients; this transient behavior is particularly important in applications like ecology where time scales are typically long~\cite{hastings2018transient}. In particular, this finite-time absorption is in sharp contrast with the remarkable complexity of the original Staver-Levin model, where the all-grass state may be unstable and other equilibria or periodic orbits emerge \cite{touboul2018complex}. Thus it is of evident interest to study the consistency between finite-size tree extinction and non-extinction in the mean-field limit for both the macroscale and mesoscale frameworks. All details related to numerics and coding for this section can be found in Appendix \ref{sec.appendix}.}
	\subsection{Macroscopic Markov Model and  Generalized Staver-Levin Model}\label{sec.multi_comparison}
	We first investigate absorption properties and the consistency between the quasi-stationary distributions (behavior of the finite-size system prior to extinction) and the GKEs of the mean-field limiting process. We then demonstrate long run transient periodic solutions to the particle system in a parameter regime for which the limiting process has a periodic law. 
	\subsubsection{Mean-field Behavior in a Single Patch}
	We analyze in detail the absorption property of the macroscale model \Denis{introduced in Example \ref{eg.macroscale} with a single patch in the forest-grass subsystem. In the notation of Example \ref{eg.macroscale}, suppose}
	\[
	W(1,1) = 1, \quad J_F(1,1) = \bar{J}>0.
	\]
	\Denis{Since we restrict our attention to the forest-grass subsystem, the finite-size} system $X$ is a classical two-state Markov process in dimension $N$ with (for each site $i\in \{1,\dots,N\}$) independent, exponentially distributed transitions with rates given by:
	\begin{equation}\label{eq:TwoStatesOnePatchNet}
	\begin{cases}
	G \to F  \qquad \bar{J}\, \hat{P}^N_F(t) &\text{(forest tree seed dispersal)}\\
	F \to G \qquad \phi\left(1-\hat{P}^N_F(t)\right) &\text{(burning of forest trees)}, \\
	\end{cases}
	\end{equation}
	where $\hat{P}^N_F=N^{-1} \sum_{j=1}^N \mathbbm{1}_{\{X^{j}(t)=F\}}$ is the fraction of forest sites (or empirical probability of the forest state across all sites) and $\bar{J}$ is the birth rate of forest trees. The mean-field limit $\bar{X}$ is a two-state non-Markovian process with independent exponentially distributed transitions that occur with rates:
	\begin{equation}\label{eq:TwoStatesOnePatchMF}
	\begin{cases}
	G \to F  \qquad &\bar{J} P_F(t) \\
	F \to G  &\phi\left( 1-P_F(t) \right), \\
	\end{cases}
	\end{equation}
	where $P_F(t)$ and $P_G(t)$ are the probabilities that $\bar{X}$ is in the forest state or grass state respectively. These probabilities can be computed by solving the GKEs of the mean-field limit:
	\begin{align}\label{eq.meanfield_diff}
	\frac{d}{dt}P_G(t) &= \left(1-P_G(t)\right) \phi\left( P_G(t) \right) - \bar{J}\, P_G(t) \left(1 - P_G(t)\right)
	\end{align}
	with $P_G(t)=1-P_F(t)$. \Denis{If $\phi(0)>0$ and $\bar{J}>0$}, the all grass state is the unique stationary distribution for the finite-size particle system and, from any initial distribution, absorption occurs in finite time with probability 1. \john{Consistent with this, $P_G=1$ is always a fixed point of the GKEs for the corresponding mean-field limit. Moreover, this fixed point is attractive for forest-tree birth rate small enough, but loses stability when the tree birth rate exceeds tree death rate in the all grass state (i.e. $\bar{J}<\phi(1)$). Therefore, for larger birthrates, the mean-field limit will not converge towards the all-grass state, the unique attractor of the finite system.}
	
	The bifurcation diagram for the GKEs of the mean-field limit process with rates given by \eqref{eq:TwoStatesOnePatchMF} is computed as a function of the forest tree birth-rate $\bar{J}$ (Figure \ref{two_state_expected_bifurcations}, red); it features two saddle-nodes and a transcritical bifurcation that split the parameter space into four main regimes: 
	\begin{itemize}
		\item for forest birth rate small enough, the only stable equilibrium is the all-grass state
		\item A saddle-node bifurcation (arising at $\bar{J} \approx 0.55$ for our choice of parameters) leads to the emergence of a stable forest-dominated equilibrium as well as an unstable fixed point. The all-grass equilibrium conserves its stability but, depending on the initial grass cover relative to the unstable equilibrium, the system either converges to the all-grass solution or to the stable forest-dominated equilibrium.
		\item As the forest birthrate is further increased, the all-grass state loses stability (at $\bar J=\phi(1)$) in favor of a grass-dominated equilibrium through a transcritical bifurcation. In that case, whatever the initial condition, the limit system never reaches the all-grass equilibrium. Moreover, as $\bar{J}$ is increased, the grass cover of the grass-dominated equilibrium decreases progressively as the forest tree birthrate increases, before disappearing through a second saddle-node bifurcation.
		\item Beyond this second saddle-node bifurcation, the only stable equilibrium persisting is the  forest-dominated equilibrium. 
	\end{itemize}
	
	The blue stars in Figure \ref{two_state_expected_bifurcations} show the state of the finite-size system at time $t = 100$ from simulations of the one-patch 2-state Markov chain with $N=3000$ sites for various values of the forest birthrate $\bar{J}$. \Denis{We simulated the finite-size system for each value of $\bar{J}$, for various initial conditions with various initial grass cover proportions (20 initial conditions when the GKEs are monostable, and 90 initial conditions regularly sampled between $0.1$ to $0.9$ in the bistable case). There is remarkable agreement between the simulations of the Markov process and the bifurcation diagram in Figure \ref{two_state_expected_bifurcations}. Even after a reasonably long time (t = 100), the finite-size system remained away from the all grass state for many initial conditions when other stable attractors existed in the GKEs of the mean-field limit.} Moreover, matching the initial grass cover with the final state, we observed that the ``basin of attraction'' of each quasi-attractor for the transient states of the finite-size Markov process system shows excellent agreement with the unstable fixed point from the GKEs. Past the transcritical bifurcation we continued to observe absorption to the all-grass state, although this fixed point is unstable in the GKEs for this parameter range.  
	
	\begin{figure}
		\centering
		\includegraphics[width=\textwidth]{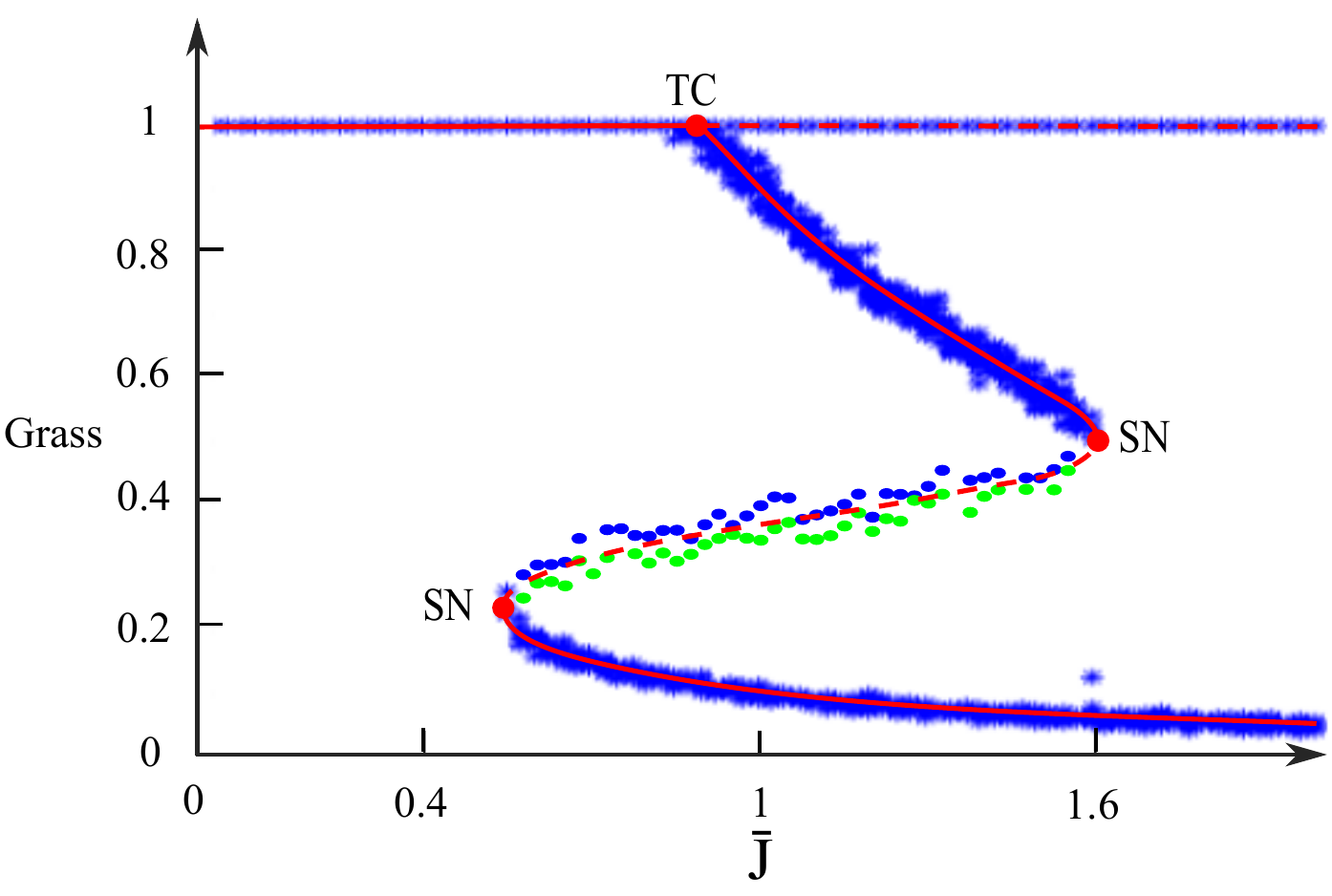}
		\caption{\textbf{Blue starred markers:} Grass proportion in the finite-size Staver-Levin model for time $t = 100$ with $N = 3000$ and $M=1$ (macroscale model). End states from $20$ different initial conditions are plotted for each value of the forest recruitment rate $\bar{J}$, and the attraction basin was computed using $90$ different initial conditions for more precision. \textbf{Red curves:} The bifurcation diagram of the GKEs for the corresponding mean-field limit is overlaid with solid red lines indicating stable equilibria and dashed red lines indicating unstable equilibria. \textbf{Blue/green dots:} Blue dots mark the lowest initial condition for the finite-size system that resulted in a final state on the upper equilibrium branch and green dots mark the largest average initial condition which gave an end state on the lower equilibrium branch. \textbf{Red dots:} These markers indicate the bifurcation points in the GKEs of the mean-field limit --- SN: saddle-node bifurcation, TC: transcritical bifurcation.}
		\label{two_state_expected_bifurcations}
	\end{figure}

	\subsubsection{Quasi-stationary Distribution and Absorption Rate}
	The apparently paradoxical persistence of transient behavior and striking agreement between the mean-field and finite-size system in Figure \ref{two_state_expected_bifurcations} can be better understood in terms of the quasi-stationary distribution (QSD) of the finite-size system. Fix $N\in\mathbb{N}$ and consider the Markov process $\tilde{X}$ which tracks the proportion of grass sites in the finite-size system so that the state space of $\tilde{X}$ is $S = \{1,\, (N-1)/N,\, \dots, 1/N,\, 0 \}$. The QSD is the stationary distribution of the system conditional on not being absorbed by the all grass state, or equivalently, it is the stationary distribution of the new process $\tilde{X}^*$ on the restricted state space $S^* = \{(N-1)/N,\, \dots, 1/N,\, 0 \}$.
	\begin{definition}
		A distribution $x \in \mathbb{R}^{N}$ is a QSD for the process $\tilde{X}$ if for each $t \geq 0$,
		\[
		\mathbb{P}_{x}\left[ \tilde{X}(t) = j\, \Big{|}\, T > t \right] = x_j, \quad j \in S^*
		\]
		where $T = \inf\{t \geq 0: \tilde{X}(t) = 1 \}$ and $\mathbb{P}_{x_0}$ denotes the probability measure associated with the process $\tilde{X}$ starting from the initial distribution $x_0$. 
	\end{definition}
	The generator matrix of the process $\tilde{X}$ has the form
	\[
	\begin{pmatrix}
	0 & \textbf{0}^T \\
	\textbf{a} & Q
	\end{pmatrix}
	\]
	where $\textbf{0}^T = \{0,\dots,0\} \in \mathbb{R^N}$, \[\textbf{a} = \left\{\tfrac{1}{N}\phi\left(\tfrac{N-1}{N}\right),0,\dots,0 \right\}^T \in \mathbb{R}^N,\]
	and $Q$ is the $N \times N$ matrix given by
	\[
	\begin{pmatrix}
	- \tfrac{1}{N}\phi\left(\tfrac{N-1}{N}\right) - \tfrac{\bar{J}}{N}\tfrac{N-1}{N} & \tfrac{\bar{J}}{N}\tfrac{N-1}{N} & 0 & & \dots & 0 \\
	\tfrac{2}{N}\phi\left(\tfrac{N-2}{N}\right) & -\tfrac{2}{N}\left(\phi\left(\tfrac{N-2}{N}\right) +\bar{J}\tfrac{N-2}{N}\right)  &  \tfrac{2\bar{J}}{N}\tfrac{N-2}{N} & 0 & \dots & 0 \\
	\vdots & \ddots & \ddots &  & & \vdots \\
	\vdots & \dots & \dots & \tfrac{N-1}{N}\phi\left(\tfrac{1}{N}\right)  & -\tfrac{N-1}{N}\left(\phi\left(\tfrac{1}{N}\right) +\tfrac{\bar{J}}{N}\right)   & \tfrac{(N-1)\bar{J}}{N}\tfrac{1}{N} \\
	0 & 0 & \dots & 0 & \phi(0) & -\phi(0)
	\end{pmatrix}
	\]
	\Denis{Since $\phi(0)>0$ and $\bar{J}>0$,  $\tilde{X}^*$ is recurrent on $S^*$ and hence the QSD exists and is unique~\cite{darroch1967quasi,norris1998markov}. The QSD $x$ associated with the process $\tilde{X}$ is given by 
		\[
		x\,Q = - \rho x, \quad x\, \textbf{1}^T = 1,
		\] 
		where $\rho$ is the dominant eigenvalue of $Q$ (i.e., eigenvalue with largest real part). In other words, the QSD is the normalized eigenvector associated with the principal eigenvalue of $Q$.}
	
	The QSD for the two-state finite-size Staver-Levin system given by \eqref{eq:TwoStatesOnePatchNet} is plotted in panel A1 of Figure \ref{fig.qsd} for each value of $\bar{J}$. The mass of the QSD concentrates on the stable states of the GKEs for the corresponding mean-field limit --- partly explaining the persistent transients observed in Figure \ref{two_state_expected_bifurcations}. A more complete explanation of this persistence is obtained by calculating the absorption rate for the Markov jump process $\tilde{X}$ - this is the speed at which we expect the particle system to approach the all grass state from an arbitrary initial distribution. More precisely, the \revision{dominant} eigenvalue of the sub-stochastic restricted transition matrix $Q$, denoted by $\rho$, gives the speed of approach to the all grass state in the sense that for any initial distribution $x_0$,
	\[
	\lim_{t\to\infty} \mathbb{P}_{x_0}\left[ \tilde{X}(t+s) \neq 1\, | \, T > t \right] = e^{-\rho s}, \quad s \geq 0.
	\]
	Panel A2 of Figure \ref{fig.qsd} shows the absorption rate $\rho$ as a function of $\bar{J}$ for multiple values of the system size $N$. The absorption rate shows a decreasing profile, indicating (as expected) a decay of the rate of absorption as the forest tree birth rate increases. More strikingly, the rates decay with a sharpening profile as $N$ increases, with a consistent switch from a rate around $0.1$ to an almost zero rate at $\bar{J} \approx 0.55$ --- coinciding exactly with the appearance of a saddle-node bifurcation in the GKEs of the mean-field system (cf. Figure \ref{two_state_expected_bifurcations}). Therefore, consistent with Figure \ref{two_state_expected_bifurcations}, we expect to observe persistent transient behavior matching the QSD for values of $\bar{J}$ past the first saddle-node bifurcation.
	\begin{figure}
		\centering
		\includegraphics[width=\textwidth]{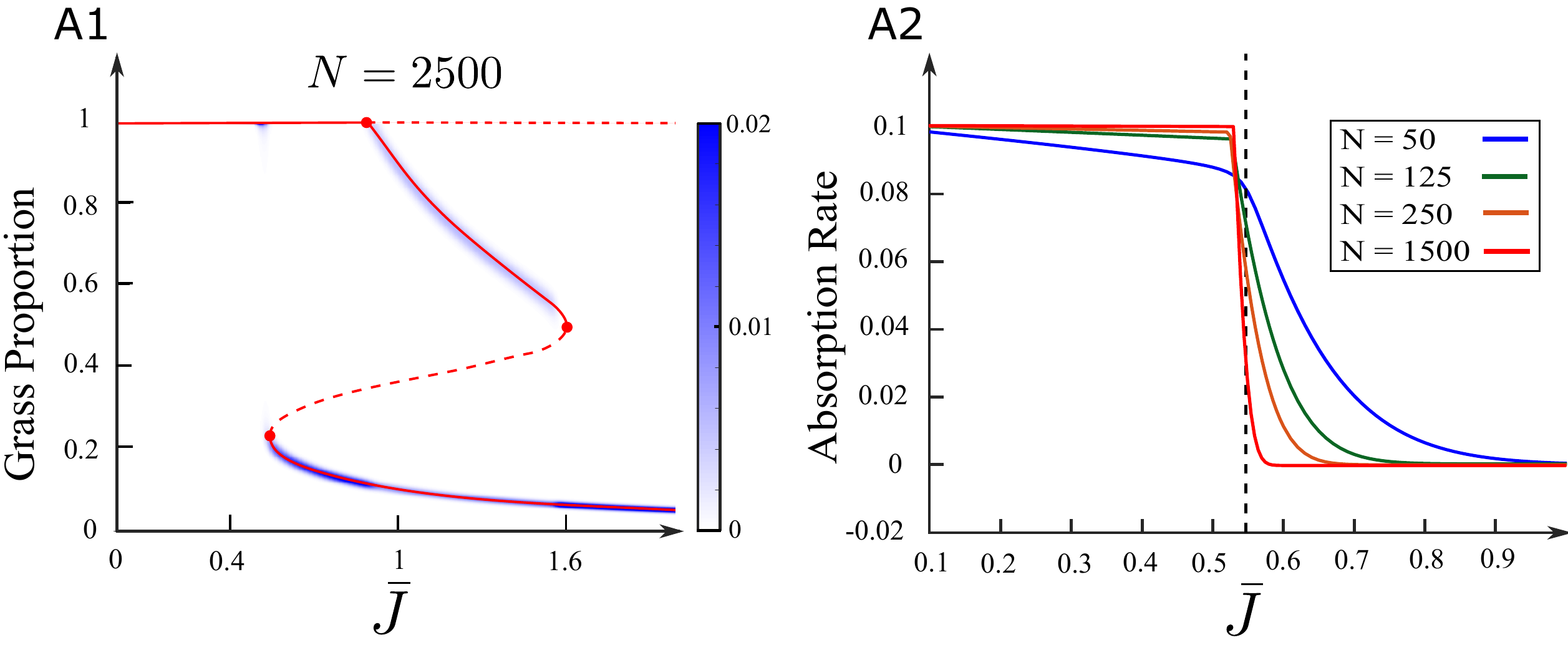}
		\caption{\textbf{A1:} Quasi-stationary distribution of the interacting particle system for $N = 2500$ for each value of $\bar{J}$.  \textbf{A2:} Absorption rate to the all-grass state as a function of $\bar{J}$.}
		\label{fig.qsd}
	\end{figure}
	\subsubsection{Stochastic Oscillations: Solutions with Time Periodic Law}
	The original Staver-Levin model \Denis{given by the system of nonlinear ODEs \eqref{eq.Staver_levin_ode} has regions of parameters where the only stable attractor is a periodic orbit. In this regime, a simple consequence of Theorem~\ref{thm_convergence_Kstates} is that the mean-field limiting process will exhibit stable oscillations.} This is a remarkable property and has previously been observed in continuous-time Markov processes described by Brownian driven stochastic differential equations (SDEs)~\cite{scheutzow1986periodic,touboul2012noise}. To the best of our knowledge, the mean-field process defined by \eqref{eq:MeanFieldMeso} is the only example identified to date of a McKean-Vlasov jump process which has a periodic law. Mathematical methods for studying nonstationary solutions are still to be developed for jump processes but in our case, the existence of a periodic law is based on the derivation of the GKEs and their bifurcation analysis, avoiding the need for probabilistic arguments. 
	
	\Denis{Panels A1 and A2 of Figure~\ref{fig.periodic_IPS} show the periodic solutions generated by the 4-species macroscale model introduced in Example \ref{eg.macroscale} in a periodic regime with $M = 1$. We observe, consistent with Theorem \ref{thm_convergence_Kstates}, that (for N sufficiently large)} trajectories of the finite-size Staver-Levin system remain close to the trajectories of the GKEs of the corresponding mean-field limit, a result analogous to the effect observed in \cite[Figure 4]{durrett1998spatial}. Periodic orbits in the original Staver--Levin model were shown to grow and disappear at a heteroclinic orbit when the forest tree birth rate increases~\cite{touboul2018complex}. We thus explored the dynamics of the finite-size Staver-Levin model as a function of the forest tree birthrate $\bar J$. The heteroclinic orbit in the original Staver--Levin model connects three fixed points: the all-grass equilibrium, a savanna equilibrium and a mixed-saddle equilibrium where all species are present. We simulated trajectories of the finite-size system for various values of $\bar J$ (see Figure \ref{fig.periodic_IPS} C2), and found that while transient trajectories show very similar dynamics to the GKEs of the mean-field limit, oscillations became transient for parameters too close to the heteroclinic cycle; the system is rapidly absorbed by the savanna subsystem (i.e., absence of forest trees), and reaches a fixed point on this subsystem. Rigorously, this fixed point is unstable for the GKEs, because of an invasion of forest trees, and trajectories of the (deterministic) GKEs visit regions very close to that fixed point. The finite-size system, following closely these trajectories, thus reaches states with very low numbers of forest trees where extinction of forest becomes very likely. Near heteroclinic cycles the finite-size system is thus vulnerable to fluctuations in the vicinity of absorbing states. Interestingly, this effect of absorption near the heteroclinic cycle shows dynamics \Denis{significantly different from simulations of the GKEs with Brownian noise (which lead to stochastic resonance phenomena)}~\cite{touboul2018complex}. From an ecological viewpoint, this reveals an interesting fragility of the ecosystems when trajectories approach absorbing subsystems.
	\begin{figure}
		\centering
		\includegraphics[width=\textwidth]{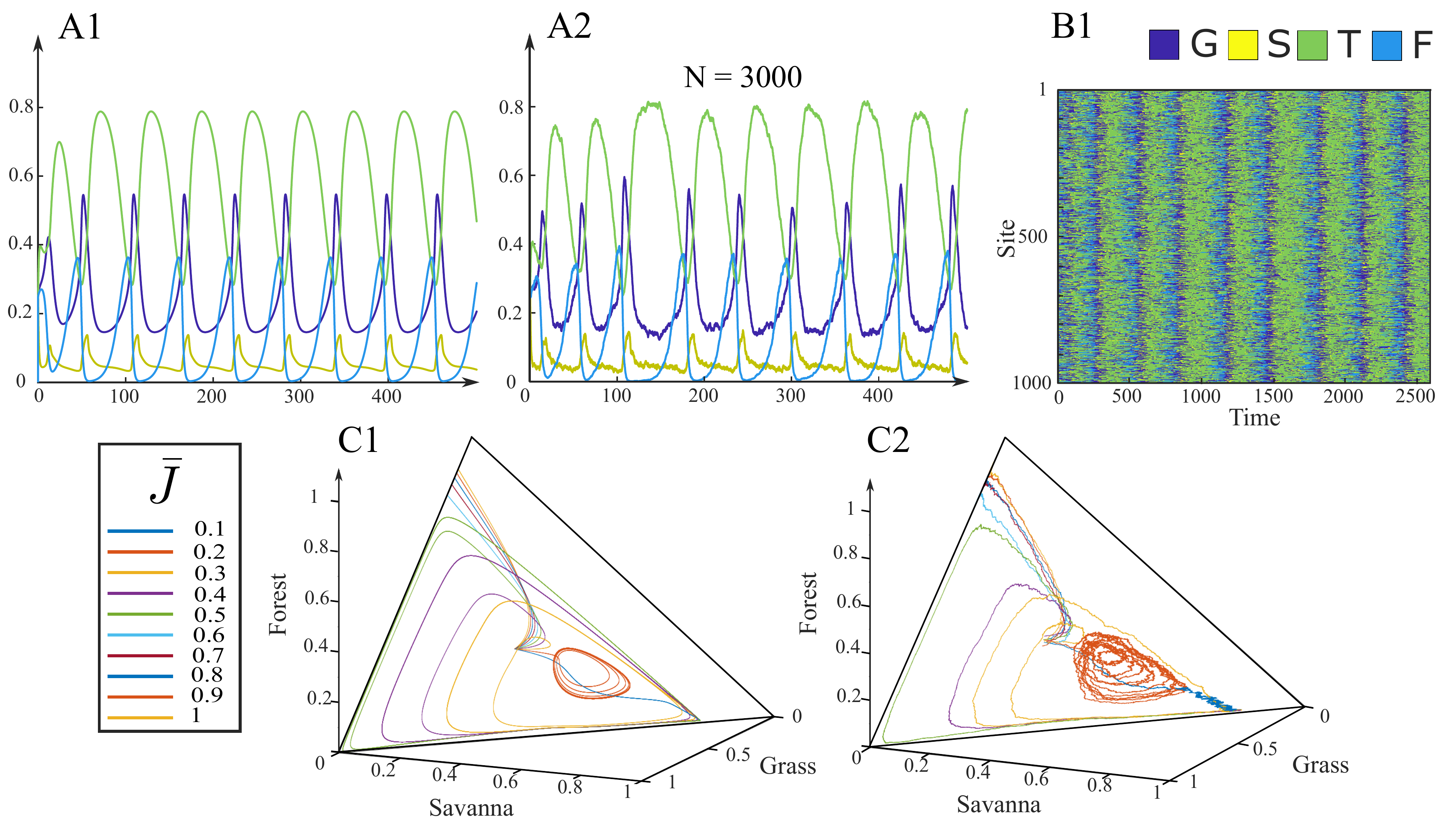}
		\caption{\textbf{A1/A2:} Comparison of the single patch macroscopic particle system model for $N = 3000$ (A2) and the solution of the corresponding Kolmogorov equations in a periodic parameter regime. \textbf{B1:}  Simulation of the macroscopic model for a large number of sites in a periodic regime. \textbf{C1/C2:} Comparison of trajectories of the particle system and Kolmogorov equation solutions in the state-space for various values of $\alpha$.}\label{fig.periodic_IPS} 
	\end{figure}
	
	\subsection{Mesoscopic Model and Spatially Extended Staver-Levin Model}
	
	In our mesoscopic framework, we derived a mean-field \revision{spatially extended} jump process characterizing the dynamics of sites distributed in space. The distribution of this process is described by integro-differential equations (IDEs) which characterize the spatio-temporal dynamics of vegetation at this scale (the GKEs). The rigorous derivation of this model from first principles provides us with a new model that incorporates heterogeneity in the sites density via the choice of the initial site distribution $q$. The study of this system can yield valuable information on the distribution of vegetation in space but we defer a full investigation of this models dynamics to further work. Here, we concentrate solely on convergence properties and dynamical consistency between the mesoscopic finite-size Staver-Levin model and the GKEs of the corresponding mean-field limit. 
	
	We focus on the simplest subsystem possible, the forest-grass subsystem, in order to illustrate our convergence results, while avoiding discussion of the more complex dynamics of the full system. In this subsystem, the dynamics are fully described by the fraction of forest trees at a given location $r\in \Gamma$ and time $t\in\R^+$, i.e. the quantity
	\[P_F(t,r) :=  \mathbb{P}\left[ \bar{X}(t,r)  = F \right].\]
	The GKEs of the mean-field process are thus given by the following nonlocal IDE:
	\begin{multline}\label{eq.kolmogorov_spatial}
	\frac{\partial}{\partial t}P_F(t,r) = P_G(t,r) \,\int_{\Gamma} J(r',r)\, P_F(t,r') \,dq\left(r' \right)\\ - P_F(t,r) \,\phi\left( \int_{\Gamma} W(r',r)\,P_G(t,r')\, dq\left(r' \right) \right), \quad (t, r)\in \mathbb{R}_+ \times \Gamma,
	\end{multline}
	with $P_G(r,t)=1-P_F(r,t)$. We study two ecologically  relevant behaviors arising in these systems, waves of invasion and emergence of fronts in heterogeneous landscapes, as well as the consistency between solutions of the finite-size system and the integro-differential equation~\eqref{eq.kolmogorov_spatial}.
	
	\subsubsection{Waves of invasion}\label{sec.waves}
	Consider a homogeneous one-dimensional\\ landscape, assumed for simplicity to be a ring (i.e. $\Gamma=\mathcal{S}_L$ the one-dimensional torus of length $L$, represented by the interval $[0,L]$ with the boundaries identified). Assuming a homogeneous landscape amounts to considering a uniform site density $q$ on $\Gamma$ within our framework. Both seed dispersal and fire propagation kernels are Gaussian functions of the form
	\[W(r,r')=\frac{J(r,r')}{\bar{J}} = \frac{C(\sigma)}{\sigma\sqrt{2\pi}} e^{-\frac{d_{\Gamma}(r,r')^2}{2\sigma^2}}, \quad (r,r')\in\Gamma^2\]
	where $d_{\Gamma}(r,r')$ denotes the distance between the points $r$ and $r'$ on the ring $\Gamma$. The nonstandard normalization factor $C(\sigma)=L \left(2\Phi(L/2\sigma)-1 \right)^{-1}$ is due to the fact that our Gaussian kernels are compactly supported and being integrated against the uniform measure on $[0,L]$\footnote{$\Phi$ denotes the cumulative distribution function of a standard Normal random variable here.}. In the present setup, spatially homogeneous solutions of the IDE \eqref{eq.kolmogorov_spatial} solve the original Staver-Levin ODEs~\eqref{eq.Staver_levin_ode} due to the translation invariance \revision{on the ring}. Therefore, spatially homogeneous stationary solutions are given by the bifurcation diagram of the ODE  in Fig.~\ref{two_state_expected_bifurcations}, but spatial interactions may alter the stability of these steady  states. Grassland is the unique spatially homogeneous equilibrium when the forest-tree birth rate $\bar{J}$ is low enough. Forest is the unique spatially homogeneous steady state for sufficiently large $\bar{J}$, while multiple homogeneous steady states co-exist for intermediate values of $\bar{J}$. We studied the competition between forest  and  grass in the coexistence regime by choosing an initial state with a region of forest trees flanked by grass on either side. In Figure \ref{fig.maxwell} (panels A1, A2, A4 and A5) we observe that, depending on the forest-tree birth-rate $\bar{J}$, grass may invade the patch of forest trees or trees may invade the grassy regions (in both the finite-size system and the GKEs). There is excellent agreement between solutions of the GKEs of the mean-field limit (i.e. the IDEs given by \eqref{eq.kolmogorov_spatial}) and those of the finite-size Markov process. 
	
	\subsubsection{Forest-grass fronts in Heterogeneous Landscapes}
	One advantage of our framework is the ability to derive mesoscopic models which incorporate environmental heterogeneity by choosing a nonuniform initial site distribution. For instance, ecologically, soils may have substantial effects on tree establishment potential, which can be reflected in our model by via lower site density regions. Reduced site density induces two opposite effects: a lessened ability to carry fires, but also lower opportunity for trees to grow. The results of simulations of the finite-size model, as well as the GKEs of the corresponding mean-field model, with a variable density of sites in the forest-grass subsystem are presented in Fig.~\ref{fig.maxwell}, panels B1--B3. For these simulations,  the site density was chosen to be a trapezoid on $\Gamma=[0,L]$, i.e. \[dq(x)=\left(a+b\,x\right)\mathbbm{1}_{[0,L]}(x)\,dx,\quad a,b>0.\] This choice ensures an increasing density of sites along the interval $\Gamma$: regions near $x=0$ have lower densities than regions near $x=L$. Space-time plots of the solutions to the finite-size model and the GKEs (panels B1 and B2) illustrate that lower site density favors forest, while higher densities favor grassland. At intermediate site densities, a sharp front forms between forest and grassland as neither species is able to invade the other, akin to a Maxwell point.  This type of solution is also referred to as a ``front pinning phenomenon'' as the sharp front forms because the wave speed of the wave of invasion which would typically annihilate the less competitive species (in the corresponding homogeneous domain problem) approaches zero at the Maxwell point \cite{van2015resilience,wuyts2018fronts}. Panel B3 compares the bifurcation diagram of the appropriate GKEs without spatial interaction ($\sigma\to 0^+$ in the kernels) with the final solution profiles of the finite-size model and the GKEs. We observe that the solution of the GKEs (in black) essentially interpolates between the two stable equilibria of the GKEs without spatial interaction (solid red). The solution of the finite-size model approximately matches that of the GKEs but naturally has some stochastic excursions since we are observing a single realization of the process.
	\begin{figure}
		\centering
		\includegraphics[width=\textwidth]{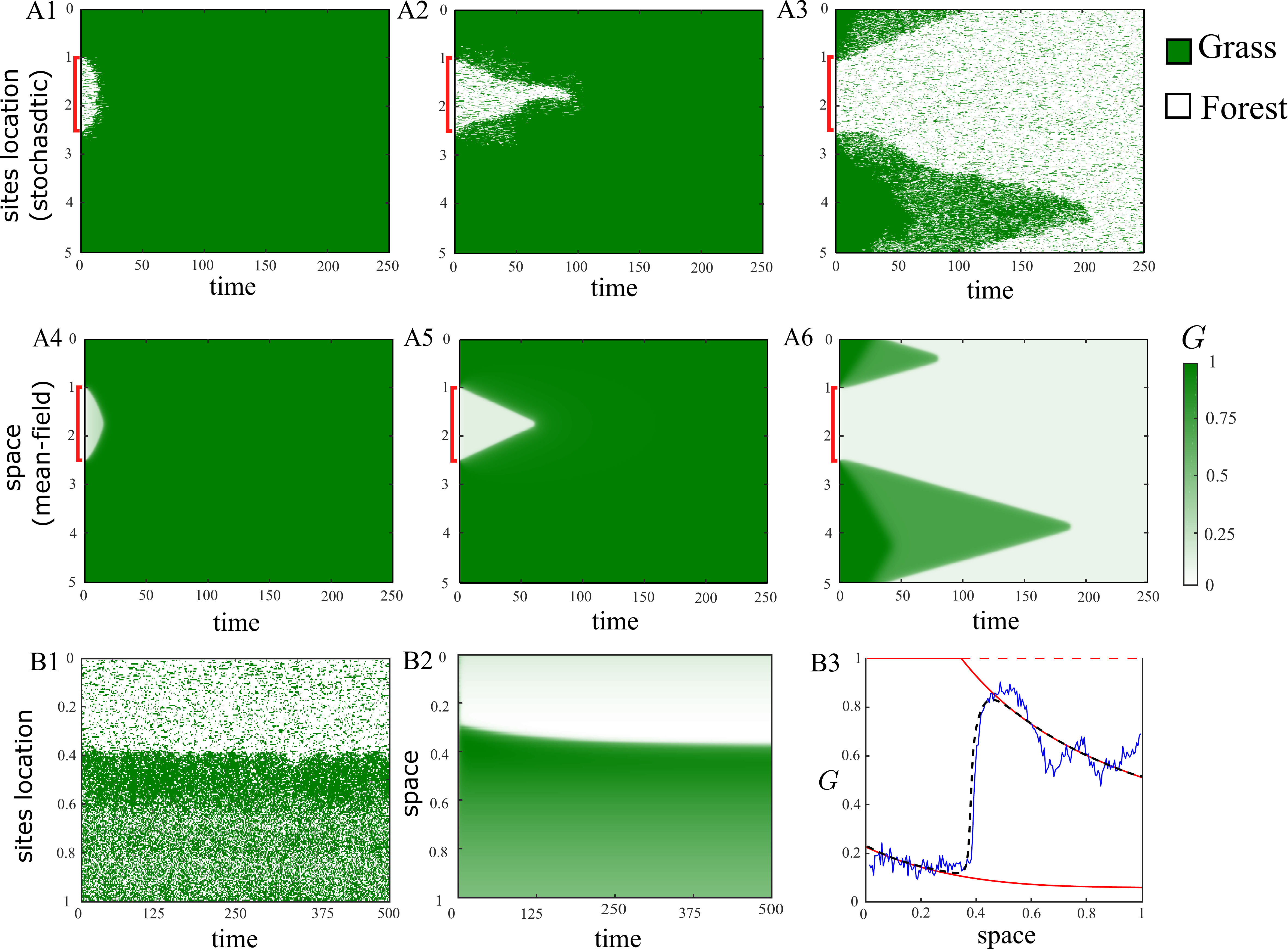}
		\caption{\textbf{A1--A6:} Comparison of solutions of the finite-size stochastic system and the corresponding GKEs for a grass dominant regime (A1, A2, A4 and A5) and a regime in which forest invades grass (A3 and A6). \textbf{Parameters:} $\Gamma = \mathcal{S}_5$ the one-dimensional torus of  length $5$, $N = 3000$, $\bar{J} = 0.5$ (A1, A4), $\bar{J}=0.9$ (A2, A5) and $\bar{J}=1.25$ (A3, A6), $\sigma = 0.05$. Initial conditions are the same for all simulations with a block of forest on $[1,\,2.5]$ (marked in red). The colorbar for A4-A6 is shown to the side. Sites are either 1 (grass) or 0 (forest) in A1-A3 and hence we use the same colorbar.\vspace*{3pt} \newline \textbf{B1--B3:} Simulated example of front pinning/Maxwell point phenomenon for the mesoscale model. Panel B1 shows a single realization of the finite-size model with panel B2 showing the solution to the corresponding GKEs. B3 shows the final solution profile of the GKEs (dashed black line), bifurcation diagram for the GKEs without spatial interaction (stable equilibria in solid red, unstable equilibria in dashed red), and the time averaged profile of the solution to the particle system (solid blue). \textbf{Parameters:} $\Gamma = [0,1]$, $N = 2000$, $\bar{J} = 1.1$, $\sigma = 0.02$, reflecting boundary conditions.}\label{fig.maxwell}
	\end{figure}
	
	\section{Proofs of Main Results} \label{sec.proofs}
	In this section, we provide the details of the proof of Theorem~\ref{thm_convergence_Kstates}. To this end, we use a bijection between the original state space $S^K$ and the $K$ vectors of the canonical basis in $\R^K$, and reformulate the finite-size Markov process as a stochastic differential equation. This allows a simpler analysis of all models in the class, to prove existence and uniqueness of solutions of the mean-field limit, as well as convergence of the finite system towards the limit system as the system size diverges.

	\subsection{Mathematical Preliminaries}\label{sec:setting}
	Throughout and in the proofs which follow, we work on a complete probability space $\left(\Omega,\mathcal{F},\mathbb{P}\right)$ endowed with a countable family of i.i.d Poisson point processes \Denis{$\mathcal{N} = \{\mathcal{N}^{i}_{x,y}(t): \, (x,y) \in S^K \times S^K, \, i \in \mathbb{N}, \, t \geq 0\}$ on $\mathbb{R}^2_+$ with compensators (i.e. predictable part) given by the Lebesgue measure on $\mathbb{R}_+^2$ with $\mathbb{R}_+:=[0,\infty)$. Define the filtration 
		\begin{equation}\label{eq.filtration}
		\mathcal{F}_t = \sigma\left( \mathcal{N}^{i}_{x,y}(A\times B): \, (x,y) \in S^K \times S^K, \, i \in \mathbb{N}, \, A \in\mathcal{B}(\mathbb{R}_+),\, B \in \mathcal{B}([0,t])\right ), \,\, t \geq 0,
		\end{equation}
		where $\mathcal{B}(H)$ denotes the space of Borel sets of $H$ and $\mathcal{N}^{i}_{x,y}(A\times B)$ is the number of points of the point process $\mathcal{N}^{i}_{x,y}$ in $A\times B$.} \revision{$\mathcal{D}(\mathbb{R}_+;E)$ denotes the Skorohod space of c\` adl\` ag functions with values in the space $E$ (typically, considered here to be $\R_+$ or $\R^p$ for some $p\in\N$), i.e. functions that are right-continuous functions with left limits everywhere}. We recall the following useful lemma (see Graham and Robert \cite{graham2009interacting}) which applies readily to processes in $\mathcal{D}([0,T];S^K)$ for our purposes. 
	\begin{lemma}\label{F_t_martingale}
		Suppose that the processes $Y = \{Y(t): t \geq 0\}$ and $Z = \{Z(t): t \geq 0\}$ are in $\mathcal{D}(\mathbb{R}_+;\mathbb{R}_+)$ and are adapted to $\left(\mathcal{F}_t\right)_{t \geq 0}$. If $\mathcal{N}$ is a Poisson process on $\mathbb{R}^2_+$ with compensator given by the Lebesgue measure on $\mathbb{R}_+^2$, then the process $I = \{I(t): t\geq 0\}$ given by
		\[
		I(t) = \int_0^t \int_0^\infty Y(s^-)  \1_{\left\{ 0 \leq z \leq Z(s^-) \right\}}\left[\mathcal{N}(dz,ds) - dz\,ds\right],
		\]
		is a local $\mathcal{F}_t$-martingale. Moreover, $I(t)$ is a piecewise constant process with jumps of time-dependent size (i.e. $Y(t)$ is the size of a jump at time $t$) occurring at the times of the jumps of a non-homogeneous Poisson process with instantaneous rate $Z(t)$. 
	\end{lemma}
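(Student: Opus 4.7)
The plan splits naturally into two parts: establishing the local martingale property via localization and compensation, and then identifying the jump structure via a thinning argument. First, since $Y$ and $Z$ are $\mathcal{F}_t$-adapted and càdlàg, their left-limits $Y(s^-)$ and $Z(s^-)$ are left-continuous adapted processes, hence predictable, so the integrand
\[
H(s,z,\omega) := Y(s^-,\omega)\,\mathbbm{1}_{\{0 \le z \le Z(s^-,\omega)\}}
\]
is predictable on $\Omega \times \mathbb{R}_+ \times \mathbb{R}_+$. To reduce to a bounded integrand, I would introduce the stopping times $\tau_n := \inf\{t \ge 0 : Y(t) \vee Z(t) \ge n\} \wedge n$, which are $\mathcal{F}_t$-stopping times with $\tau_n \uparrow \infty$ almost surely because $Y$ and $Z$ are locally bounded. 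On $[0,\tau_n]$ we have $Y(s^-), Z(s^-) \le n$, so $H\,\mathbbm{1}_{[0,\tau_n]}$ is bounded by $n$.

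For each $n$, the stopped process $t \mapsto I(t \wedge \tau_n)$ is the stochastic integral of this bounded predictable function against the compensated Poisson random measure $\tilde{\mathcal{N}}(dz,ds) := \mathcal{N}(dz,ds) - dz\,ds$. The elementary $L^2$-estimate
\[
\mathbb{E}\!\left[\int_0^{t \wedge \tau_n}\!\!\int_0^\infty H(s,z)^2\,dz\,ds\right] = \mathbb{E}\!\left[\int_0^{t \wedge \tau_n} Y(s^-)^2 Z(s^-)\,ds\right] \le n^3\,t < \infty,
\]
combined with the standard $L^2$-isometry for compensated Poisson integrals (Ikeda--Watanabe, or Jacod--Shiryaev), shows that $I(\cdot \wedge \tau_n)$ is a square-integrable $\mathcal{F}_t$-martingale. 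Letting $n \to \infty$ gives the local martingale property.

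For the jump structure, decompose $I(t) = J(t) - C(t)$, where $J(t)$ is the uncompensated integral against $\mathcal{N}$ and $C(t) := \int_0^t Y(s^-)Z(s^-)\,ds$. The compensator $C$ is absolutely continuous in $t$, so all discontinuities of $I$ come from $J$. Since $\mathcal{N}$ is almost surely a simple point measure, $J$ jumps precisely at those times $t_k$ at which $\mathcal{N}$ has an atom $(z_k,t_k)$ with $z_k \le Z(t_k^-)$, and the corresponding jump size is $Y(t_k^-)$. The counting process of accepted times,
\[
N^*(t) := \int_0^t\!\!\int_0^\infty \mathbbm{1}_{\{0 \le z \le Z(s^-)\}}\mathcal{N}(dz,ds),
\]
then has predictable $\mathcal{F}_t$-compensator $\int_0^t Z(s^-)\,ds$, directly from the compensator of $\mathcal{N}$.

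The main obstacle is the final identification of $N^*$ as a non-homogeneous Poisson process with rate $Z$: because $Z$ is itself random and adapted, the elementary thinning argument for Poisson processes must be upgraded. The cleanest rigorous route is Watanabe's characterization: a simple counting process whose predictable compensator is of the form $\int_0^t \lambda(s)\,ds$ with $\lambda$ adapted is, conditionally on the $\sigma$-algebra generated by $\lambda$, a time-changed Poisson process; equivalently, given the path of $Z$, the accepted atom times form a non-homogeneous Poisson process of instantaneous rate $Z(t)$. Importantly, no independence hypothesis between $\mathcal{N}$ and $(Y,Z)$ is needed, because only the $\mathcal{F}_t$-compensator of $\mathcal{N}$ enters the computation.
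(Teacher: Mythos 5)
The paper does not actually prove this lemma: it is quoted as a known result and attributed to Graham and Robert \cite{graham2009interacting}, so there is no in-text argument to compare against. Your proposal supplies a correct, self-contained proof along the standard lines one would find in that reference or in Ikeda--Watanabe: predictability of $Y(s^-)\mathbbm{1}_{\{0\le z\le Z(s^-)\}}$ from left-continuity of left limits, localization by $\tau_n=\inf\{t: Y(t)\vee Z(t)\ge n\}\wedge n$ (which makes the integrand bounded on the stochastic interval $[0,\tau_n]$, since both processes stay below $n$ strictly before $\tau_n$), the $L^2$-isometry giving that each stopped process is a square-integrable martingale, and the decomposition $I=J-C$ with $C$ absolutely continuous to read off the jump structure of $J$. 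This is all sound, and your computation $\int_0^\infty\mathbbm{1}_{\{0\le z\le Z(s^-)\}}\,dz=Z(s^-)$ correctly identifies the compensator $\int_0^t Z(s^-)\,ds$ of the accepted-atom counting process $N^*$. The one place where you should be careful is the very last identification: the claim that, conditionally on the path of $Z$, the accepted atoms form an inhomogeneous Poisson process is not true in the generality you state it, because in the intended applications $Z$ is itself built from the state process driven by $\mathcal{N}$, so conditioning on the whole path of $Z$ leaks information about the jump times of $N^*$. The correct general statement -- and the one actually used in the paper -- is that $N^*$ is a simple counting process admitting $Z(t^-)$ as its $(\mathcal{F}_t)$-stochastic intensity (equivalently, $N^*(t)-\int_0^t Z(s^-)\,ds$ is a local martingale); the lemma's phrase ``non-homogeneous Poisson process with instantaneous rate $Z(t)$'' is informal shorthand for exactly this. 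Since everything downstream in the paper (the expectation identity following the lemma and the estimates in the proofs of Theorems \ref{thm_convergence_Kstates} and \ref{thm.exist_unique_spatial}) only uses the compensator identity, this caveat does not affect the validity of your argument; it is a matter of stating the conclusion in the right form rather than a gap.
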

	In particular, under the hypotheses and notation of Lemma \ref{F_t_martingale}, we have
	\[
	\Denis{\mathbb{E}\left[ \int_0^t \int_0^\infty Y(s^-) \1_{\left\{ 0 \leq z \leq Z(s^-) \right\}}\mathcal{N}(dz,ds) \right] = \mathbb{E}\left[ \int_0^t Y(s^-) Z(s^-)\,ds \right],}
	\]
	a fact which is used frequently in the arguments which follow.
	
	In order to prove that the mean-field equation introduced in Theorem~\ref{thm_convergence_Kstates} admits a well-defined solution, we must consider spatially extended discrete-state continuous-time stochastic processes. $\Gamma$ always denotes a Borel set in $\mathbb{R}^2$ and $q$ is a probability measure on $(\Gamma, \mathcal{B}(\Gamma))$. To analyze these processes, we work in the space $\mathcal{M}_T$ of c\'adl\`ag processes $Y= \{Y(t,r): \, r \in \Gamma, \, t \in [0,T]  \} $ with state space $S^K$ which are measurable on the product space $(\Omega \times \Gamma)$ and, for each fixed $r\in \Gamma$, $Y(t,r)$ is $\mathcal{F}_t$-adapted\footnote{Rigorously, these processes and their measurability are defined through Markov kernels as done in the statement of assumption $H_{IC}$. Here, measurability is stated to make sure the norm is well-defined; the measurability and adaptedness will be obvious for the processes relevant to our proof, and therefore the norm $\|\cdot\|_{\mathcal{M}_T}$ is well-defined.}. Define the norm $\| \cdot \|_{\mathcal{M}_T}$ on $\mathcal{M}_T$ by {
	\begin{equation}\label{def_norm}
	\|Y\|_{\mathcal{M}_T} = \mathcal{E}\left[ \mathbb{E}\left[ \sup_{t\in [0,T]} \|Y(t,r')\| \right] \right] = \int_\Gamma \mathbb{E}\left[  \sup_{t\in[0,T]}\|Y(t,r') \|\right]  dq(r')
	\end{equation}
	where $||\cdot||$ denotes the standard Euclidean norm on $\mathbb{R}^K$. The norm $\| \cdot \|_{\mathcal{M}_T}$ identifies processes in $\mathcal{M}_T$ that are $\mathbb{P}$ a.s.-$q$-a.e. equal on $[0,T]$.} The space $\mathcal{M}_T$ is a complete separable metric space under this norm (see, e.g.,~\cite[Chapter 3]{billingsley2013convergence}).
	\subsection{Stochastic differential equation formulation of the Markov process}
	Theorem~\ref{thm_convergence_Kstates} deals with an interacting Markov process model with finite state space $S^K$ containing $K$ elements $(x_1,\dots,x_K)$. For convenience and definiteness, we assume (up to bijection) that the elements of $S^K$ are given by the canonical basis of $\R^K$:
	\[
	x_1 := \tfrac{1}{\sqrt{2}}\left(1,0,\dots,0\right)^T, \quad x_2 := \tfrac{1}{\sqrt{2}}\left(0,1,0,\dots,0\right)^T,\dots, \quad x_K := \tfrac{1}{\sqrt{2}}\left(0,\dots,0,1\right)^T.
	\]	
	This choice (and scaling) is particularly convenient in that it allows using the usual Euclidean norm on $\mathbb{R}^K$ as our norm on $S^K$, and in that norm, all states are equidistant (one unit apart). 
	
	\revision{Using that representation, the state of site $i$ is a process in $\mathcal{D}(\R_+,S^K)$ whose evolution is described by the Poisson-driven stochastic differential equation}:
	\begin{multline}\label{eq.network_4_state}
	X^{i}(t) = X^{i}(0) + \\ \sum_{y \in S^K}\int_0^t (y-X^i(s^{-})) \sum_{\substack{x \in S^K, \\ x \neq y} }\1_{ \{X^i(s^-) = x\} } \int_0^\infty \1_{\{0\leq z\leq R_{x,y}^{i,N}(X(s^{-}))\}} \mathcal{N}^{i}_{x,y}(dz,ds)
	\end{multline}
	\john{for each $t \geq 0$. The transition rate intensity $R_{x,y}^{i,N}$ is given by equation \eqref{eq:RatesGeneral} and we write $R_{x,y}^{i,N}(X(s^{-}))$ to emphasize that the transition rate depends on the entire system $X(s^-) = \left\{X^i(s^-): i \in \{1,\dots,N\} \right\}$. In equation \eqref{eq.network_4_state}, for each $x,y\in S^K \times S^K$ and $i\in\{1,\dots, N\}$, each $\mathcal{N}^{i}_{x,y}$ is an independent Poisson point process on $\mathbb{R}^2_+$ with compensator (or predictable part) given by the Lebesgue measure on $\mathbb{R}_+^2$ (see Section~\ref{sec:setting}). Indeed, the processes \[\int_0^{\infty} \1_{\{0 \leq z \leq f(t)\}} \mathcal{N}^{i}_{x,y}(dz,ds)\] are inhomogeneous Poisson processes with instantaneous rate $f(t)$, and therefore the system of SDEs defined by equation  \eqref{eq.network_4_state} is a Markov process with independent exponentially distributed transitions with rates $R_{x,y}^{i,N}(X(s^{-}))$.}
	
	Under assumptions H1 and H2, we can define the following quantities:
	\begin{enumerate}[(i.)]
		\item There exists $L>0$ such that for all $x,y \in S^K$ and all $X,Y \in R_+$,
		\[\lvert \Phi_{x,y}(X) - \Phi_{x,y}(Y) \rvert \leq L \,\lvert X- Y \rvert.\]
		\item $\| \Phi_{x,y} \|_\infty := \sup_{u\in [0,\,\|W\|_\infty]} \left|\Phi_{x,y}(u)\right|<\infty$.
		\item $\| W_{x,y} \|_\infty := \sup_{r,r'} \left| W_{x,y}(r,r')\right|<\infty$.
		\item $\| W \|_\infty := \sup_{x,y \in S^K}\| W_{x,y} \|_\infty<\infty$.
	\end{enumerate} 
	Under the assumptions H1, H2 and $H_{IC}$, strong existence and uniqueness of solutions for the finite-size Markov jump process is classical (see, e.g., \cite{ikeda2014stochastic}).
	\subsection{The McKean-Vlasov process: Definition, existence and uniqueness of solutions}\label{sec:existUnique}
	Theorem~\ref{thm_convergence_Kstates} states that the finite-size Markov process with transition rates given by \eqref{eq:RatesGeneral} converges in distribution towards a \revision{spatially extended} process defined on the support of $q$ on $\Gamma$. The limit process, denoted by $\bar{X}$ and referred to as the mean-field limit, has inter-jump times that are independent and exponentially distributed. The jump rates of $\bar{X}$ at time $t$ depend self-consistently on the law of the process itself and are given by:
	\begin{equation}\label{eq:Rate2}
	x \to y \qquad \Phi_{x,y}\left( \int_\Gamma W_{x,\,y} \left(r,\,r'\right)\mathbb{P}\left[\bar{X}(t^-,r') = \psi(x,\,y) \right]dq(r') \right) =: \bar{R}_{x,y}\left(\bar{X}(t^-),r \right),
	\end{equation}	
	for each pair of states $(x,y) \in S^K \times S^K$ and where $\bar{X}(t^-) = \left\{\bar{X}(t^-,r') : \, r' \in \Gamma \right\}$. $\bar{X}$ is non-Markovian because  of the dependence of the rates upon the law of the solution, and not upon the state of the system. The mean-field limit described above is a so-called McKean-Vlasov process~\cite{villani2002review}. Due to the nonstandard nature of these processes, classical existence and uniqueness theory is not sufficient to \revision{establish} well-posedness. Although the constructions are standard, we provide the necessary existence and uniqueness arguments to show that the mean-field limit is well-posed in order to keep the presentation self contained.
	
	\begin{definition}\label{def_strong_solution}
		A strong solution of the mean-field process with initial condition $\xi_0(r)$ with a regular distribution in space (in the sense of hypothesis $H_{IC}$) is a process $\bar{X}\in \mathcal{M}_T$ such that for $q$-almost every $r$, $\bar{X}(0,r)$ is $\mathbb{P}$-almost surely equal to $\xi_0(r)$ and, given the law of the solution up to time $t$, the process jumps from state $x$ to a state $y$ at a rate $\bar{R}_{x,y}$ given by equation \eqref{eq:Rate2}. 
		
		Equivalently, given a family of Poisson processes $\mathcal{N} = \{\mathcal{N}_{x,y}(t): \, (x,y) \in S^K \times S^K,\, t \geq 0 \}$, a solution $\bar{X}\in \mathcal{M}_T$ is a process such that for $q$-almost every $r$, $\bar{X}(0,r)$ is $\mathbb{P}$-almost surely equal to $\xi_0(r)$ and, for $q$-almost every $r \in\Gamma$ and each $t\in[0,T]$, the following  equation holds $\mathbb{P}$-almost surely:
		\begin{multline}\label{eq.mean_field_existence}
		\bar{X}(t,r) = \xi_0(r) + \\ \sum_{y\in S^K} \int_0^t (y-\bar{X}(s^{-},r)) \sum_{\substack{x \in S^K, \\ x \neq y} }\1_{ \{ \bar{X}(s^-,r) = x\} } \int_0^\infty \1_{\left\{0\leq z\leq \bar{R}_{x,y}\left(\bar{X}(s^-),r \right) \right\}} \mathcal{N}_{x,y}(dz,ds),
		\end{multline}
	\end{definition}
	
	The equivalence between the process associated with transition rates~\eqref{eq:Rate2} and the process which solves the mean-field equation~\eqref{eq.mean_field_existence} follows from the same classical arguments as those allowing us to write up the Poisson driven SDEs describing the evolution of the finite-size system. Indeed, if the solution to~\eqref{eq.mean_field_existence} exists and assuming that $\bar{X}(t^-,r)=x$, then a jump of size $(y-x)$ (therefore, moving the process from $x$ to $y=(y-x)+x$) arises at a random, exponentially distributed time with rate $\bar{R}_{x,y}\left(\bar{X}(t^-),\,r \right)$.  
	
	\begin{definition}\label{def_unique}
		The solution to \eqref{eq.mean_field_existence} is unique if for any two strong solutions $\bar{X}$ and $\tilde{X}$ to \eqref{eq.mean_field_existence}, the event
		\[
		\left\{ \bar{X}(t,r)  = \tilde{X}(t,r) \mbox{ for each } t \in [0,T] \right\}
		\]
		has probability one with respect to the product measure $\mathbb{P} \bigotimes q$.
	\end{definition}
	
	\begin{theorem}\label{thm.exist_unique_spatial}
		Suppose H1 and H2 hold. Let $\xi_0(r)$ be a spatially measurable random variable on $S^K$ (in the sense of $H_{IC}$) independent of the Poisson processes $\mathcal{N}_{x,y}$ for $x,y \in S^K$. The mean-field equation \eqref{eq.mean_field_existence} with initial condition $\xi_0(r)$ has a unique strong solution in $\mathcal{M}_T$.
	\end{theorem}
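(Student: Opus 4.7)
The plan is a Picard-type fixed-point argument in the complete metric space $(\mathcal{M}_T, \|\cdot\|_{\mathcal{M}_T})$. I would first introduce the map $\Psi : \mathcal{M}_T \to \mathcal{M}_T$ which sends a candidate process $Y$ to the process $X^Y$ solving the Poisson-driven SDE analogous to~\eqref{eq.mean_field_existence}, but where the self-referential rate is replaced by the rate computed against the \emph{law of $Y$},
\[
\bar{R}^Y_{x,y}(s,r) := \Phi_{x,y}\!\left(\int_\Gamma W_{x,y}(r,r')\,\mathbb{P}[Y(s^-,r')=\psi(x,y)]\,dq(r')\right).
\]
Once $Y$ is fixed, this rate is a deterministic, $(s,r)$-measurable function bounded by $\|\Phi\|_\infty$, so $X^Y$ is a site-wise independent, time-inhomogeneous Markov jump process driven by $\mathcal{N}$; classical theory yields its strong existence, uniqueness, and the required measurability, hence $X^Y \in \mathcal{M}_T$. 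A strong solution of \eqref{eq.mean_field_existence} is precisely a fixed point of $\Psi$.

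Next, I would show $\Psi$ is a strict contraction on a short time horizon. Given $Y, Z \in \mathcal{M}_T$, construct $\Psi(Y)$ and $\Psi(Z)$ using the \emph{same} Poisson processes $\mathcal{N}$ and the same initial data $\xi_0$. In the embedding of $S^K$ into $\mathbb{R}^K$ used in Section~\ref{sec:setting}, any two distinct states are at Euclidean distance $1$, so $\sup_{u\le t}\|\Psi(Y)(u,r)-\Psi(Z)(u,r)\|$ equals the indicator that the two coupled processes have diverged by time $t$. Since they share both the initial state and the driving noise, divergence can only be triggered by a Poisson point of some $\mathcal{N}^{r}_{x,y}$ falling strictly between the two rates $\bar{R}^Y_{x,y}(s,r)$ and $\bar{R}^Z_{x,y}(s,r)$ while both processes are still in state $x$. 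Bounding the supremum indicator by the number of such ``bad'' points, compensating against Lebesgue measure, and using H1 together with the elementary bound $|\mathbb{P}[Y=x]-\mathbb{P}[Z=x]|\leq\mathbb{E}\|Y-Z\|$ (which is exact in our embedding since $\|Y-Z\| = \1_{Y \neq Z}$) yields
\[
\mathbb{E}\sup_{u\le t}\|\Psi(Y)(u,r)-\Psi(Z)(u,r)\| \leq K^2 L\|W\|_\infty \int_0^t\int_\Gamma \mathbb{E}\|Y(s,r')-Z(s,r')\|\,dq(r')\,ds,
\]
and integrating against $q(dr)$ gives $\|\Psi(Y)-\Psi(Z)\|_{\mathcal{M}_T}\leq CT\|Y-Z\|_{\mathcal{M}_T}$ with $C = K^2 L\|W\|_\infty$.

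Choosing $T_0 < 1/C$ makes $\Psi$ a strict contraction on $\mathcal{M}_{T_0}$, so the Banach fixed-point theorem yields a unique strong solution on $[0, T_0]$; since $C$ is independent of the time origin, iterating on $[T_0, 2T_0]$ with the previous terminal state as initial condition extends the construction to $[0, T]$ for any finite $T$. The main obstacle is the coupling bookkeeping: one must identify that, under the shared-noise coupling, the supremum of the discrepancy is driven solely by ``rate-swap'' events occurring while both processes sit in the same state (so no Gronwall term in $\Psi(Y)-\Psi(Z)$ appears), and separately verify that the rate $(s,r)\mapsto \bar{R}^Y_{x,y}(s,r)$ is jointly measurable and predictable, a property inherited from the Markov kernel structure of $H_{IC}$, so that $\Psi$ genuinely maps $\mathcal{M}_T$ into itself.
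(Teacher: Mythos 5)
Your argument is correct, and it reaches the result by a genuinely different fixed-point map than the paper. The paper's $\Psi$ substitutes the previous iterate \emph{everywhere} on the right-hand side of \eqref{eq.mean_field_existence} — in the jump amplitudes $(y-X(s^-,r))$, in the occupancy indicators $\1_{\{X(s^-,r)=x\}}$, \emph{and} in the law entering $\bar R_{x,y}$ — so $\Psi(X)$ is an explicit integral functional of $X$ and the difference $\|X^{k+1}-X^k\|_{\mathcal{M}_T}$ must be split into three pieces $\hat A_{x,y}+\hat B_{x,y}+\hat C_{x,y}$ (rate mismatch, amplitude mismatch, occupancy mismatch), each controlled separately via Lemma \ref{F_t_martingale}, H1--H2 and the identity $\|Y-Z\|=\1_{\{Y\neq Z\}}$. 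You instead freeze only the law, so each iterate is the strong solution of a decoupled time-inhomogeneous Markov jump SDE with deterministic bounded rates; under the shared-noise, shared-initial-condition coupling the two outputs coincide until the first Poisson point landing between the two rate levels, and the sup-discrepancy collapses to the single ``rate-swap'' term — essentially the paper's $\hat A_{x,y}$ alone, with no analogues of $\hat B$ or $\hat C$. The price is a preliminary (routine, but necessary) well-posedness and joint $(\Omega\times\Gamma)$-measurability check for the frozen-law SDE, which you correctly flag; the payoff is a one-line Lipschitz estimate and an approximating sequence that is genuinely Markovian at each stage. The remaining divergence is cosmetic: the paper obtains the factorial bound $(\bar K T)^k/k!$ and hence Cauchy-ness on all of $[0,T]$ in one pass, whereas you contract on $[0,T_0]$ with $T_0<1/C$ and restart; the restart is legitimate because the frozen-law equation on $[T_0,2T_0]$ has the same structure with the $\mathcal{F}_{T_0}$-measurable terminal state as new initial datum. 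Uniqueness follows in both treatments from the same estimate applied to two solutions (Gronwall in the paper, uniqueness of the fixed point in yours). Your constant $K^2L\|W\|_\infty$ versus the paper's $K(K-1)$ count of ordered pairs is immaterial.
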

	\begin{proof}
		Define the mapping $\Psi$ which acts on processes $X = \{ X(t,r):\, r\in\Gamma,\, t \in [0,T] \}$ in $\mathcal{M}_T$ according to
		\begin{multline*}
		\Denis{\Psi\left( X \right)(t,r) =
			\xi_0(r) +} \\ \Denis{ \sum_{y\in S^K} \int_0^t (y-X(s^{-},r)) \sum_{\substack{x \in S^K, \\ x \neq y} }\1_{ \{ X(s^-,r) = x\} } \int_0^\infty \1_{\left\{0\leq z\leq \bar{R}_{x,y}\left(X(s^-),r \right) \right\}} \mathcal{N}_{x,y}(dz,ds)},
		\end{multline*}
		for each $(t,r)\in\mathbb{R}_+ \times\Gamma$. 
		
		According to Definition~\ref{def_strong_solution}, a solution to the mean-field equation~\eqref{eq.mean_field_existence} is a fixed point of $\Psi$. Therefore,  proving the theorem amounts to showing (i.) $\Psi$ has a fixed point, and (ii.) the solution obtained as the fixed point of $\Psi$ is unique according to Definition~\ref{def_unique}. Because of the consistency between Definition~\ref{def_unique} and the norm $\| \cdot \|_{\mathcal{M}_T}$ given by~\eqref{def_norm}, uniqueness will follow if all solutions are indistinguishable under $\| \cdot \|_{\mathcal{M}_T}$. We demonstrate both properties using a classical argument based on Picard's iterates. In order to be able to iterate the map $\Psi$, we must first establish that for any $X \in \mathcal{M}_T$, the process $Y := \Psi\left( X \right)$ also belongs to $\mathcal{M}_T$. For $X \in \mathcal{M}_T$, the integrals over $\Gamma$ in the jump intensity functionals
		\begin{align*}
		\Denis{\bar{R}_{x,y}\left(X(s^-),r \right) = \Phi_{x,y}\left( \int_\Gamma W_{x,y} \left(r,\,r'\right)\mathbb{P}\left[X(s^-,r') = \psi(x,y) \right]dq(r') \right),\quad y \in S^K,}
		\end{align*}
		are $\mathcal{F}_s$-measurable, continuous with respect to $r$ and finite due to the\\ $\left(\Omega \times \Gamma \right)$-measurability and boundedness of $X$, and regularity and boundedness of the $W_{x,y}$'s and $\Phi_{x,y}$'s respectively (see H1 and H2). Therefore, each $Y(t,r)$ is $\mathcal{F}_t$-adapted and measurable with respect to $\left(\Omega \times \Gamma\right)$. Since $X(t,r)\in S^K$, $Y(t,r)$ takes values in $S^K$ as well. Therefore, $\Psi$ maps $\mathcal{M}_T$ to itself. We can thus define the sequence of processes $(X^k(r))_{k \geq 0}$ as follows: 
		\[
		X^0 = \left\{ X^0(t,r): \, X^0(t,r) = \xi_0(r),\, r \in \Gamma, \, t \in [0,T] \right\}, 
		\]
		with the \Denis{$S^K$}-valued random variable $\xi_0(r)$ chosen to be $ \left(\Omega\times\Gamma\right)\mbox{-measurable}$
		and 
		\[
		X^{k+1} = \left\{ X^{k+1}(t,r): \, X^{k+1}(t,r) = \Psi\left( X^k \right)(t,r),\, r \in \Gamma, \, t \in [0,T] \right\}, \quad k \geq 1.
		\]
		For $k \geq 1$, estimate $\left|\left| X^{k+1}-X^k \right|\right|_{\mathcal{M}_T}$ as follows:
		\begin{align}\label{eq.exposition}
			\| X^{k+1}-X^k \|_{\mathcal{M}_T} &= \mathcal{E}\left[\mathbb{E}\left[ \sup_{t \in [0,T]}\| X^{k+1}(t,r)-X^{k}(t,r) \| \right]\right]\nonumber \\
			&= \sum_{y\in S^K} \sum_{\substack{x \in S^K, \\ x \neq y}} \mathcal{E}\left[\mathbb{E}\left[ \sup_{t \in [0,T]}\left\|\int_0^t \int_0^\infty \Delta(z,s) \, \mathcal{N}_{x,y}(dz,ds) \right\|\right]\right],
		\end{align}
		where 
		\begin{multline*}
			\Delta(z,s) = (y-X^k(s^-,r))\1_{\left\{ X^{k}(s^-,r) = x \right\}}\1_{\left\{0\leq z\leq \bar{R}_{x,y}\left(X^k(s^-),r \right) \right\}}\\ - (y-X^{k-1}(s^-,r))\1_{\left\{ X^{k-1}(s^-,r) = x \right\}}\1_{\left\{0\leq z\leq \bar{R}_{x,y}\left(X^{k-1}(s^-),r \right) \right\}}.
		\end{multline*}
	Thus
	\begin{align}\label{eq.inequality_1}
		\| X^{k+1}-X^k \|_{\mathcal{M}_T} &\leq \sum_{y\in S^K} \sum_{\substack{x \in S^K, \\ x \neq y}} \mathcal{E}\left[\mathbb{E}\left[ \sup_{t \in [0,T]}\int_0^t \int_0^\infty \|\Delta(z,s)\| \, \mathcal{N}_{x,y}(dz,ds) \right]\right]\nonumber \\
		&\leq \sum_{y\in S^K} \sum_{\substack{x \in S^K, \\ x \neq y}} \mathcal{E}\left[\mathbb{E}\left[ \int_0^T \int_0^\infty \|\Delta(z,s)\| \, \mathcal{N}_{x,y}(dz,ds) \right]\right],
	\end{align}
	where the second inequality follows from the fact that for each pair $(x,y) \in S^K\times S^K$, $\mathcal{N}_{x,y}$ is a nonnegative random measure on $\mathbb{R}_+^2$; this step only requires boundedness of $\Delta$ on $\mathbb{R}_+^2$ and does not depend on the regularity of the integrand. From here, estimate $\|\Delta(z,s)\|$ by breaking it into 3 parts as follows:
	\begin{align*}
		\|\Delta(z,s)\| &= 	\|(y-X^k(s^-,r))\1_{\left\{ X^{k}(s^-,r) = x \right\}}\1_{\left\{0\leq z\leq \bar{R}_{x,y}\left(X^k(s^-),r \right) \right\}}\\
		&\qquad\pm (y-X^k(s^-,r))\1_{\left\{ X^{k-1}(s^-,r) = x \right\}}\1_{\left\{0\leq z\leq \bar{R}_{x,y}\left(X^{k-1}(s^-),r \right) \right\}}\\
		&\qquad\pm (y-X^k(s^-,r))\1_{\left\{ X^{k}(s^-,r) = x \right\}}\1_{\left\{0\leq z\leq \bar{R}_{x,y}\left(X^{k-1}(s^-),r \right) \right\}}\\ &\qquad- (y-X^{k-1}(s^-,r))\1_{\left\{ X^{k-1}(s^-,r) = x \right\}}\1_{\left\{0\leq z\leq \bar{R}_{x,y}\left(X^{k-1}(s^-),r \right) \right\}}\|\\
		&\leq \|y-X^k(s^-,r) \|\left|\1_{\left\{0\leq z\leq \bar{R}_{x,y}\left(X^k(s^-),r \right) \right\}} - \1_{\left\{0\leq z\leq \bar{R}_{x,y}\left(X^{k-1}(s^-),r \right) \right\}}  \right| \\
		&\qquad+ \|X^k(s^-,r) - X^{k-1}(s^-,r)\| \1_{\left\{0\leq z\leq \bar{R}_{x,y}\left(X^{k-1}(s^-),r \right) \right\}} \\
		&\qquad+ \|y - X^k(s^-,r) \| \left|\1_{\left\{ X^{k}(s^-,r) = x \right\}} - \1_{\left\{ X^{k-1}(s^-,r) = x \right\}}  \right|\1_{\left\{0\leq z\leq \bar{R}_{x,y}\left(X^{k-1}(s^-),r \right) \right\}}.
	\end{align*}
	Next insert this estimate into the right-hand side of \eqref{eq.inequality_1} and use Lemma \ref{F_t_martingale} to further simplify. For example, applying Lemma \ref{F_t_martingale} to the first term yields
	\begin{multline*}
	\mathcal{E}\left[\mathbb{E}\left[\int_0^T \int_0^\infty\|y-X^k(s^-,r) \|\left|\1_{\left\{0\leq z\leq \bar{R}_{x,y}\left(X^k(s^-),r \right) \right\}} - \1_{\left\{0\leq z\leq \bar{R}_{x,y}\left(X^{k-1}(s^-),r \right) \right\}}  \right|\mathcal{N}_{x,y}(dz,ds)\right]\right]\\ = \mathcal{E}\left[\mathbb{E}\left[\int_0^T \|y-X^k(s^-,r) \|\left|\bar{R}_{x,y}\left(X^k(s^-),r \right) -\bar{R}_{x,y}\left(X^{k-1}(s^-),r \right) \right|\,ds\right]\right]
	\end{multline*}
	Estimating the remaining terms in the same way thus yields
		\begin{align}\label{eq.first_key_inequality}
		\| X^{k+1}-X^k \|_{\mathcal{M}_T} &\leq  \Denis{\sum_{y\in S^K} \sum_{\substack{x \in S^K, \\ x \neq y} } \left\{\hat{A}_{x,y}(T) + \hat{B}_{x,y}(T) + \hat{C}_{x,y}(T)\right\},}
		\end{align}
		where 
		\begin{equation*}
		\hat{A}_{x,y}(T) := \mathcal{E}\left[ \mathbb{E}\left[ \int_0^T \|  y - X^k(s^-,r) \| \left|\bar{R}_{x,y}\left(X^k(s^-),\,r \right) - \bar{R}_{x,y}\left(X^{k-1}(s^-),\,r \right) \right| ds \right] \right],
		\end{equation*}
		\begin{equation*}
		\hat{B}_{x,y}(T) := \mathcal{E}\left[ \mathbb{E}\left[ \int_0^T  \lvert \lvert  X^{k}(s^-,r) - X^{k-1}(s^-,r) \rvert \rvert\, \left|\bar{R}_{x,y}\left(X^{k-1}(s^-),\,r \right) \right| ds \right] \right],
		\end{equation*}
		and 
		\begin{equation*}
		\hat{C}_{x,y}(T) := \mathcal{E}\left[ \mathbb{E}\left[ \int_0^T \left|\1_{\left\{ X^{k}(s^-,r) = x \right\}} - \1_{\left\{ X^{k-1}(s^-,r) = x \right\}}  \right| \left|\bar{R}_{x,y}\left(X^{k-1}(s^-),r \right) \right|{ds} \right] \right].
		\end{equation*}
		Since $\lvert \lvert  y - X^k(s^-,r) \rvert \rvert \leq 1$, 
		\begin{equation}\label{eq.estA}
		\hat{A}_{x,y}(T) \leq \mathcal{E}\left[ \mathbb{E}\left[ \int_0^T  \left|\bar{R}_{x,y}\left(X^k(s^-),\,r \right) - \bar{R}_{x,y}\left(X^{k-1}(s^-),\,r \right) \right| {ds} \right] \right].
			\end{equation}
		Estimating the integrand in \eqref{eq.estA} using the Lipschitz continuity of $\Phi_{x,y}$ and the boundedness of $W_{x,y}$ yields
		\begin{multline*}
		\left|\bar{R}_{x,y}\left(X^k(s^-),\,r \right) - \bar{R}_{x,y}\left(X^{k-1}(s^-),\,r \right) \right| \leq \\ L \| W_{x,y} \|_{\infty} \int_{\Gamma} \left|\mathbb{P}\left[X^k(s^-,r') = \psi(x,y) \right] - \mathbb{P}\left[X^{k-1}(s^-,r') = \psi(x,y) \right]\right|dq(r').
		\end{multline*}
		For each fixed $r'$ and each fixed $s \geq 0$, we have 
		\begin{align*}
		\left|\mathbb{P}\left[X^k(s^-,r') = \psi(x,y) \right] - \mathbb{P}\left[X^{k-1}(s^-,r') = \psi(x,y) \right]\right| & \\ &\hspace{-90pt} \leq\mathbb{E}\left[ |\1_{ \{X^k(s^-,r') = \psi(x,y)\} } - \1_{\{X^{k-1}(s^-,r') = \psi(x,y)\}}| \right] \\ &\hspace{-90pt} \leq \mathbb{E}\left[ \| X^{k}(s^-,r') - X^{k-1}(s^-,r') \| \right],
		\end{align*}
		since \[
		\| X^{k}(s^-,r') - X^{k-1}(s^-,r') \| = \begin{cases}
		0, \quad &X^{k}(s^-,r') = X^{k-1}(s^-,r'), \\
		1, &X^{k}(s^-,r') \neq X^{k-1}(s^-,r').
		\end{cases}
		\]
		Thus 
		\begin{align*}
		\left|\bar{R}_{x,y}\left(X^k(s^-),\,r \right) - \bar{R}_{x,y}\left(X^{k-1}(s^-),\,r \right) \right| & \\ &\hspace{-100pt}\leq 
		L \| W_{x,y} \|_{\infty} \int_{\Gamma} \mathbb{E}\left[ \| X^{k}(s^-,r') - X^{k-1}(s^-,r') \| \right]\,dq(r').
		\end{align*}
		Returning to equation \eqref{eq.estA} and using the estimate above gives 
		\begin{align*}
		\hat{A}_{x,y}(T) &\leq L\, \| W_{x,y}\rvert \|_{\infty}\, \int_0^T \int_{\Gamma} \mathbb{E}\left[ \| X^{k}(s^-,r') - X^{k-1}(s^-,r') \| \right]\,dq(r')\, {ds}\\
		&= L\, \| W_{x,y}\|_{\infty}\, \int_0^T \| X^k - X^{k-1} \|_{\mathcal{M}_t}dt.
		\end{align*}
		The estimation for $\hat{B}_{x,y}$ follows simply from using the boundedness of $\Phi_{x,y}$ to obtain
		\begin{align*}
		\hat{B}_{x,y}(T) &\leq ||\Phi_{x,y}||_{\infty}\, \int_0^T \mathcal{E}\left[ \mathbb{E}\left[ \|  X^{k}(s^-,r) - X^{k-1}(s^-,r) \| \right] \right]  {ds} \\
		&= \|\Phi_{x,y}\|_{\infty}\, \int_0^T \| X^k - X^{k-1} \|_{\mathcal{M}_t}dt.
		\end{align*}
		Similarly, $\hat{C}_{x,y}$ can be bounded as follows:
		\begin{align*}
		\hat{C}_{x,y}(T) &\leq \mathcal{E}\left[ \mathbb{E}\left[ \int_0^T \left|\1_{\left\{ X^{k}(s^-,r) = x \right\}} - \1_{\left\{ X^{k-1}(s^-,r) = x \right\}}  \right| \left|\bar{R}_{x,y}\left(X^{k-1}(s^-),r \right) \right| {ds} \right] \right] \\ & \leq \|\Phi_{x,y}\|_\infty \int_0^T \mathcal{E}\left[ \mathbb{E}\left[ \left|\1_{\left\{ X^{k}(s^-,r) = x \right\}} - \1_{\left\{ X^{k-1}(s^-,r) = x \right\}}  \right|  \right] \right]{ds} \\
		&\leq \|\Phi_{x,y}\|_\infty \int_0^T \mathcal{E}\left[\mathbb{E}\left[ \| X^{k}(s^-,r)  -  X^{k-1}(s^-,r) \|  \right] \right]{ds}\\ 
		&= \|\Phi_{x,y}\|_\infty \int_0^T \| X^{k} -  X^{k-1} \|_{\mathcal{M}_t}  dt.
		\end{align*}
		Combining the estimates for $\hat{A}_{x,y}$, $\hat{B}_{x,y}$ and $\hat{C}_{x,y}$ yields
		\[
		\| X^{k+1}-X^k \|_{\mathcal{M}_T} \leq \bar{K}\, \int_0^T \| X^{k}-X^{k-1} \|_{\mathcal{M}_t}dt,\quad \mbox{for each }k\geq 1,
		\]
		where 
		\[
		\bar{K} = \sum_{y\in S^K} \sum_{\substack{x \in S^K, \\ x \neq y} }\left( L\, \|W_{x,y}\|_\infty + 2\|\Phi_{x,y}\|_\infty \right).
		\]
		Therefore 
		\[
		\| X^{k+1}-X^k \|_{\mathcal{M}_T} \leq \frac{(\bar{K}T)^k}{k!} \| X^{1}-X^0 \|_{\mathcal{M}_T} \leq \frac{(\bar{K}T)^k}{k!}, \quad k \geq 1.
		\]
		Thus $\left(X^k\right)_{k \geq 0}$ is a Cauchy sequence in the complete space $\mathcal{M}_T$, and converges to a limit in $\mathcal{M}_T$. The conclusion of the theorem is standard at this point (see, e.g., \cite{revuz2013continuous}) and yields the existence of an $\mathcal{F}_t$-adapted $(\Omega\times\Gamma)$-measurable process $\bar{X} = \left\{ \bar{X}(t,r):\,r\in\Gamma,\, t \in [0,T]  \right\} \in \mathcal{M}_T$ such that $\bar{X} = \Psi\left(\bar{X}\right)$. Therefore $\bar{X}$ is a strong solution to \eqref{eq.mean_field_existence} on $[0,T]$.
		
		The estimates above can be used to show that for any two solutions $\bar{X}$ and $\tilde{X}$ to \eqref{eq.mean_field_existence}, 
		\[ \|\bar{X}-\tilde{X}\|_{\mathcal{M}_T}  \leq K'  \int_0^T \|\bar{X}-\tilde{X}\|_{\mathcal{M}_s}ds.  \]
		Using Gronwall's lemma and the fact that the two solutions have identical initial conditions allow us to conclude that $\bar{X}=\tilde{X}$ in the norm $\|\cdot\|_{\mathcal{M}_T}$ and hence solutions to \eqref{eq.mean_field_existence} are unique in the sense of Definition \ref{def_unique}.
	\end{proof}
	\subsection{Convergence towards the McKean-Vlasov equation}\label{proof:Convergence}
	We now undertake the proof of the convergence result in Theorem \ref{thm_convergence_Kstates}. 
	
	\begin{proof}[Proof of Theorem~\ref{thm_convergence_Kstates} I. Convergence]
		Fix $N\in \N$, $i\in \{1,\dots,N\}$, a time $\tau>0$, and a fixed configuration of sites $(r_1,\dots,r_N)$ drawn as independent and identically distributed random variables according to the probability measure $q$ on $\Gamma$. We proceed to demonstrate that the process $X$ which solves \eqref{eq.network_4_state} converges almost surely with respect to $\mathbb{P}\otimes q$ to a \emph{coupled} process $\bar{X}^i$ a particular solution of equation~\eqref{eq.mean_field_existence} with the same initial condition and Poisson processes as site $i$:
		\begin{multline*}
		\bar{X}^i(t,r) = \xi_0^i(r) + \\ \sum_{y\in S^K} \int_0^t (y-\bar{X}^i(s^{-},r)) \sum_{\substack{x \in S^K, \\ x \neq y} }\1_{ \{ \bar{X}^i (s^-,r) = x\} } \int_0^\infty \1_{\left\{0\leq z\leq \bar{R}_{x,y}\left(\bar{X}^i(s^-),\,r \right) \right\}} \mathcal{N}^i_{x,y}(dz,ds),
		\end{multline*}
		
		Estimate the distance between $X^i$ and $\bar{X}^i$ in the norm $\|\cdot\|_{\mathcal{M}_\tau}$. Using the same arguments as those that established the inequalities \eqref{eq.inequality_1} and \eqref{eq.first_key_inequality} above, and noting that the identical initial conditions simply cancel, we obtain:
		\begin{align}\label{eq.initial_est_spatial}
		\mathbb{E}\left[ \sup_{t \in [0,\tau]} \| X^{i}(t) - \bar{X}^i(t,r_i) \|  \right] &\leq \sum_{y \in S^K} \sum_{\substack{x \in S^K \\ x \neq y}} A_{x,y}(\tau) + B_{x,y}(\tau) + C_{x,y}(\tau),
		\end{align}
		where
		\begin{equation*}
		A_{x,y}(\tau) = \mathbb{E}\left[\int_0^\tau  \| y - X^i(s) \| \,\left| \1_{\left\{X^i(s^-)= x\right\}} - \1_{\left\{\bar{X}^i(s^-,r_i)= x\right\}}  \right|\, \left|R_{x,y}^{i,N}\left(X(s^-)\right) \right|ds \right],
		\end{equation*}		
		\begin{equation*}
		B_{x,y}(\tau) = \mathbb{E}\left[\int_0^\tau \| y - X^i(s^-) \| \, \left|\bar{R}_{x,y}\left(\bar{X}^i(s^-),r_i\right)
		-R_{x,y}^{i,N}\left(X(s^-)\right) \right| ds \right],
		\end{equation*}	and
		\begin{equation*}
		C_{x,y}(\tau) = \mathbb{E}\left[\int_0^\tau  \| \bar{X}^i(s^-,r_i) - X^i(s^-) \| \,\left| \1_{\left\{\bar{X}^i(s^-,r_i)= x\right\}}  \right|\, \left|\bar{R}_{x,y}\left(\bar{X}^i(s^-),r_i\right) \right| ds \right].
		\end{equation*}	
		We immediately obtain an upper bound on $A_{x,y}$ since $\| y - X^i(s) \| \leq 1$ and $R_{x,y}^{i,N}$ is bounded due to the boundedness of each $\Phi_{x,y}$. In particular,
		\begin{align*}
		\mathcal{E}\left[A_{x,y}(\tau)\right] &\leq \|\Phi_{x,y}\|_\infty \, \mathcal{E}\left[\mathbb{E}\left[ \int_0^\tau  \left| \1_{\left\{X^i(s^-)= x\right\}} - \1_{\left\{\bar{X}^i(s^-,r_i)= x\right\}}  \right|\,ds \right]\right] \\
		&\leq \|\Phi_{x,y}\|_\infty  \int_0^\tau \mathcal{E}\left[\mathbb{E}\left[\sup_{u \in [0,s]}  \| X^i(u^-) - \bar{X}^i(u^-,r_i) \|\right]\right]\,ds.
		\end{align*}
		Similarly, it is straightforward to derive the following estimate on $C_{x,y}$:
		\[
		\mathcal{E}\left[C_{x,y}(\tau)\right] \leq \|\Phi_{x,y}\|_{\infty} \int_0^\tau \mathcal{E}\left[\mathbb{E}\left[ \sup_{u \in [0,s]}\| \bar{X}^i(u^-,r_i) - X^i(u) \| \right]\right] ds.
		\]
		The requisite estimation for $B_{x,y}$ is nontrivial; we claim that 
		\begin{equation}\label{eq.key_est_spatial}
		\mathcal{E}\left[ B_{x,y}(\tau) \right] \leq \\ 
		L\, \|W_{x,y}\|_\infty\, \int_0^\tau \mathcal{E}\left[\mathbb{E}\left[ \sup_{u \in [0,s]} \| \bar{X}^i(u,r_i)-X^i(u) \| \right] \right]ds + \tau\, L\,\sqrt{\frac{C}{N}},
		\end{equation}
		for some constant $C>0$ which is independent of $X^i$, $\bar{X}^i$ and $\mathcal{A}_N$. To establish that \eqref{eq.key_est_spatial} holds, begin by estimating as follows: 
		\begin{align}\label{eq.exp.split}
		\mathcal{E}\left[B_{x,y}(\tau)\right] &\leq \int_0^\tau \mathcal{E}\left[\mathbb{E}\left[ \left|\bar{R}_{x,y}\left(\bar{X}^i(s^-),r_i\right)
		-R_{x,y}^{i,N}\left(X(s^-)\right) \right|\right]\right] \,ds.
		\end{align}
		Next make the following estimate on the difference of the transition rate functions using the Lipschitz continuity of $\Phi_{x,y}$:
		\begin{align*}
		\left|\bar{R}_{x,y}\left(\bar{X}^i(s^-),r_i\right)
		-R_{x,y}^{i,N}\left(X(s^-)\right) \right| &\leq \\ &\hspace{-185pt} L \left| \frac{1}{N}\sum_{j=1}^N W_{x,y}(r_i,r_j) \1_{\{ X^j(s^-) = \psi(x,y) \}} - \int_\Gamma W_{x,y}(r_i,r') \mathbb{P}\left[ \bar{X}^i(s^-,r') = \psi(x,y) \right]dq(r') \right|.
		\end{align*}
		Thus 
		\begin{align*}
		\left|\bar{R}_{x,y}\left(\bar{X}^i(s^-),r_i\right)
		-R_{x,y}^{i,N}\left(X(s^-)\right) \right| &\leq \\ &\hspace{-120pt}\frac{L}{N}\sum_{j=1}^N W_{x,y}(r_i,r_j) \left|\1_{\{ X^j(s^-) = \Psi(x,y) \}} - \1_{\{ \bar{X}^i(s^-,r_j) = \psi(x,y)\}}  \right| + \\
		&\hspace{-185pt} L \left| \frac{1}{N}\sum_{j=1}^N W_{x,y}(r_i,r_j) \1_{\{ \bar{X}^i(s^-,r_j) = \psi(x,y) \}} - \int_\Gamma W_{x,y}(r_i,r') \mathbb{P}\left[ \bar{X}^i(s^-,r') = \psi(x,y) \right]dq(r') \right| \\
		&=: D^1_{x,y}(s) + D^2_{x,y}(s).
		\end{align*}
		The first term above is easily estimated using the boundedness of the kernels $W_{x,y}$:
		\begin{align*}
		\mathcal{E}\left[ \mathbb{E}\left[ D^1_{x,y}(s) \right]\right] &\leq  \frac{L\,\|W_{x,y}\|_{\infty}}{N}\sum_{j=1}^N \mathcal{E}\left[\mathbb{E}\left[\left|\1_{\{ X^j(s^-) = \psi(x,y) \}} - \1_{\{ \bar{X}^i(s^-,r_j) = \psi(x,y)\}}  \right|\right]\right] \\ &\leq \frac{L\,\|W_{x,y}\|_{\infty}}{N}\sum_{j=1}^N \mathcal{E}\left[\mathbb{E}\left[\| X^j(s^-) - \bar{X}^i(s^-,r_j) \|\right]\right].
		\end{align*}
		For each $j\in \{1,\dots,N \}$ and each fixed $s\geq 0$, the random variables\\ $\| X^j(s^-)-\bar{X}^i(s^-,r_j) \|$ are identically distributed given the configuration $\mathcal{A}_N$. In other words, the quantity
		\[
		\mathcal{E}\left[ \mathbb{E}\left[  \|X^j(s^-)-\bar{X}^i(s^-,r_j) \| \right] \right]
		\]
		does not depend on $j$. Hence
		\begin{equation*}
		\mathcal{E}\left[ \mathbb{E}\left[ D^1_{x,y}(s) \right]\right] \leq  L\,\|W_{x,y}\|_{\infty}\,\,\mathcal{E}\left[\mathbb{E}\left[\| X^k(s^-) - \bar{X}^i(s^-,r_k) \|\right]\right],
		\end{equation*}
		for each $k \in \{1,\dots,N\}$. It follows readily that 
		\begin{equation}\label{eq.est_pt1}
		\int_0^\tau \mathcal{E}\left[ \mathbb{E}\left[ D^1_{x,y}(s) \right]\right]\,{ds} \leq  L\,\|W_{x,y}\|_{\infty}\int_0^\tau\mathcal{E}\left[\mathbb{E}\left[ \sup_{u \in [0,s]}\| X^k(u^-) - \bar{X}^i(u^-,r_k) \| \right]\right]\,ds,
		\end{equation}
		for each $k \in \{1,\dots,N\}$. 
		
		In order to estimate $D^2_{x,y}(s)$, note that the collection of random variables\\ $\left( W_{x,y}(r_i,r_j) \1_{\{\bar{X}^i(s^-,r_j) = \psi(x,y)\}} \right)_{j \in \{1,\dots,N \}}$ are conditionally i.i.d. given $r_i$. Hence the expression
		\begin{equation}\label{eq.sum_iid_conditional}
		\frac{1}{N}\sum_{j=1}^N W_{x,y}(r_i,r_j) \1_{\left\{ \bar{X}^i(s^-,r_j) = \psi(x,y) \right\}} 
		\end{equation}
		is, conditionally on $r_i$, the sum of i.i.d. random variables with finite mean and variance (since the process $\bar{X}^i$ is a.s. bounded). It is then natural to define $\hat{\mathbb{E}}_i$, the conditional expectation operator on $\Omega' \times \Omega$ given $r_i$. The mean of each summand in \eqref{eq.sum_iid_conditional} with respect to $\hat{\mathbb{E}}_i$ is given by
		\begin{equation}\label{eq.mean_conditional}
		\hat{\mathbb{E}}_i\left[ W_{x,y}(r_i,r_j) \1_{\{\bar{X}^i(s^-,r_j) = \psi(x,y)\}} \right] = \int_{\Gamma} W_{x,y}(r_i,r')\mathbb{P}\left[ \bar{X}^i(s^-,r') = \psi(x,y) \right]dq(r').
		\end{equation}
		Let 
		\[
		F^1_{x,y}(s) = \frac{1}{N}\sum_{j =1}^N W_{x,y}(r_i,r_j) \1_{\{\bar{X}^i(s^-,r_j) = \psi(x,y)\}}
		\]
		and 
		\[
		F^2_{x,y}(s) = \int_{\Gamma} W_{x,y}(r_i,r')\mathbb{P}\left[ \bar{X}^i(s,r')=\psi(x,y) \right]dq(r').
		\]
		Thus, for each fixed $s \in [0,\tau]$, H{\"o}lder's inequality yields
		\begin{multline}\label{eq.var.est}
		\hat{\mathbb{E}}_i \left[ \left| F^1_{x,y}(s) - F^2_{x,y}(s) \right| \right]
		\leq \left( \hat{\mathbb{E}}_i \left[ \left( F^1_{x,y}(s) - F^2_{x,y}(s) \right)^2\, \right] \right)^{1/2} \\
		\leq \frac{1}{N} \left( \sum_{j=1}^N \hat{\mathbb{E}}_i \left[ \left(W_{x,y}(r_i,r_j) \1_{\{\bar{X}^i(s^-,r_j) = \psi(x,y)\}} - F^2_{x,y}(s)\right)^2\, \right]  \right)^{1/2},
		\end{multline}
		where we have used that 
		\[
		\Cov\left( W_{x,y}(r_i,r_j) \1_{\{\bar{X}^i(s^-,r_j) = \psi(x,y)\}} ,\,(W_{x,y}(r_i,r_k) \1_{\{\bar{X}^i(s^-,r_k) = \psi(x,y)\}}  \right) = 0,
		\]
		for $j \neq k$, conditional on $r_i$. By \eqref{eq.mean_conditional}, the summands in \eqref{eq.var.est} are variances of (conditionally) i.i.d. random variables. Thus
		\begin{equation*}
		\hat{\mathbb{E}}_i \left[ \left| F^1_{x,y}(s) - F^2_{x,y}(s) \right| \right]
		\leq \frac{1}{\sqrt{N}} \left( \hat{\mathbb{E}}_i \left[ \left(W_{x,y}(r_i,r_k) \1_{\{\bar{X}^i(s^-,r_k) = \psi(x,y)\}} - F^2_{x,y}(s)\right)^2\, \right]  \right)^{1/2}
		\end{equation*}
		for any $k \neq i$. Since $\bar{X}^i$ is a bounded process, we can uniformly bound its variance by a deterministic constant $C>0$. Thus
		\[
		\hat{\mathbb{E}}_i \left[ \left| F^1_{x,y}(s) - F^2_{x,y}(s) \right| \right]  \leq \sqrt{\frac{C}{N}}\quad \mbox{for some }C>0.
		\]
		Therefore, by law of total expectation, 
		\[
		\mathcal{E}\left[\mathbb{E} \left[  \left| F^1_{x,y}(s) - F^2_{x,y}(s) \right| \right]\right] \leq \sqrt{\frac{C}{N}}.
		\]
		Finally, integrate over $[0,\tau]$ to obtain the desired estimate for $D^2_{x,y}$, i.e. 
		\begin{align*}
		\int_0^\tau \mathcal{E}\left[ \mathbb{E} \left[ \left| D^2_{x,y}(s) \right| \right] \right]{ds} &\leq L\,\int_0^\tau \mathcal{E}\left[ \mathbb{E} \left[ \left| F^1_{x,y}(s)-F^2_{x,y}(s) \right| \right] \right]{ds} \leq \tau\,L\,\sqrt{\frac{C}{N}}.
		\end{align*}
		This estimate establishes the claimed inequality for $B_{x,y}$ from equation \eqref{eq.key_est_spatial}. Finally return to \eqref{eq.initial_est_spatial} and apply the estimates on $A_{x,y}$, $B_{x,y}$ and $C_{x,y}$ derived above to conclude that 
		\begin{align*}
		\mathcal{E}\left[ \mathbb{E}\left[ \sup_{t \in [0,\tau]}\| X^i(t)  - \bar{X}^i(t,r_i) \| \right] \right] &\leq K_1 \int_0^\tau \mathcal{E}\left[ \mathbb{E}\left[ \sup_{u \in [0,s]}\| X^i(u)-\bar{X}^i(u,r_i) \| \right] \right]\,ds  + \frac{K_2}{\sqrt{N}},
		\end{align*}
		where 
		\[
		K_1 = \sum_{y \in S^K} \sum_{\substack{x \in S^K \\ x \neq y}} 2\|\Phi_{x,y}\|_\infty + L\,\|W_{x,y}\|_\infty, \quad K_2 = \tau\sqrt{C} L K (K-1)>0.
		\] 
		Finally apply Gronwall's inequality to show that 
		\[
		\mathcal{E}\left[ \mathbb{E}\left[ \sup_{t \in [0,\tau]} \| X^i(t)  - \bar{X}^i(t,r_i) \| \right] \right]dt \leq \frac{K_1 K_2 \tau}{\sqrt{N}}e^{K_1 \tau} + \frac{K_2}{\sqrt{N}},
		\]
		and letting $N\to \infty$ shows that $X^i(t)$ converges $\mathbb{P}$-$q$ a.s. to $\bar{X}^i(t,r_i)$ for each $t \in [0,\tau]$, which implies the convergence in law stated in the theorem. 
	\end{proof}
	
	\subsection{Generalized Kolmogorov equations}
	We now prove Theorem~\ref{thm_convergence_Kstates} II, the characterization of the law of the mean-field limit through the so-called generalized Kolmogorov equations. Classical Kolmogorov equations are linear differential equations governing the evolution in time of the probability to be at a given state for a continuous-time Markov process. The mean-field limit is non-Markovian, as its transition rates depend on the distribution of the solution at all other locations. In Section~\ref{sec:existUnique} we proved that there exists a unique strong solution to the mean-field process. Therefore, the rates of transition between different states given by~\eqref{eq:Rate2} are well defined, since they only depend on that probability distribution, which is in turn well-defined. Let $\bar{X}(t,r)$ be a solution of the mean-field process, recall that $P_x(t,r)=\P[\bar{X}(t,r)=x]$ and define for each pair of distinct states $x,y \in S^K$ the functions:
	\begin{equation*}
	\Lambda_{x,y}(t)=\Phi_{x,y}\left( \int_\Gamma W_{x,\,y} \left(r,\,r'\right)P_{\psi(x,\,y)}(t^-,r')\, dq(r') \right).
	\end{equation*}	
	These functions are well-defined, non-negative and bounded, and define univocally an auxiliary time inhomogeneous Markov process $Y(t)$ with transition rate from state $x$ to state $y\neq x$ at time $t$ given by $\Lambda_{x,y}(t)$. Standard theory from Markov processes ensures that the law of process $Y(t)$, denoted by $Q$, satisfies the usual Kolmogorov Equations (KE). In detail, the transition probabilities
	\[Q_{x,y}(t)=\P[Y(t)=y \,\vert\, X(0)=x]\] 
	satisfy the differential equations:
	\[\begin{cases}
	\frac{d}{dt}Q_{x,y}(t) = \sum_{z\neq y} Q_{x,z}(t)\Lambda_{z,y}(t) - \Lambda_y(t) Q_{x,y}(t) & \text{Forward KEs}\\
	\frac{d}{dt}Q_{x,y}(t) = \sum_{z\neq x} \Lambda_{x,z}(t) Q_{z,y}(t) - \Lambda_x(t)Q_{x,y}(t) & \text{Backward  KEs}
	\end{cases}\]
	with $\Lambda_x(t):=\sum_{z\neq x}\Lambda_{x,z}(t)$ for each $t \geq 0$.
	
	The probability distribution of $Y(t)$ is equal to that of the non-Markovian McKean-Vlasov process~\eqref{eq.mean_field_existence} because the Markov process $Y$ has (by definition) independent exponentially distributed transitions with rates $\Lambda_{x,y}(t)$, exactly as the McKean-Vlasov process $\bar{X}$. Therefore, the transition probabilities 
	\[P_{x,y}(t,r)=\P[\bar{X}(t,r)=y \,\vert\, \bar{X}(0,r)=x]\]
	are equal to $Q_{x,y}(t)$. The linear Kolmogorov equations for the transition probabilities of $Y$ thus convert into nonlinear equations for those of $\bar{X}$:
	\[\begin{cases}
	\partial_t P_{x,y}(t,r) = \sum_{z\neq y} P_{x,z}(t,r)\Lambda_{z,y}(t) - \Lambda_y(t) P_{x,y}(t,r) & \text{Forward GKEs}\\
	\partial_t P_{x,y}(t,r) = \sum_{z\neq x} \Lambda_{x,z}(t) P_{z,y}(t,r) - \Lambda_x(t)P_{x,y}(t,r) & \text{Backward  GKEs}.
	\end{cases}\]
	Given that $P_x(t,r)=\sum_{y}P_{y,x}(t,r)\mu_0(r,y)$ with $\mu_0(r,y)=\P[\bar{X}(r,0)=y]$, we thus obtain
	\begin{equation}\label{eq.GKE}
	\partial_t P_{x}(t,r) = \sum_{z\neq x} P_{z}(t,r) \Lambda_{z,x}(t)  - \Lambda_x(t)P_{x}(t,r), \quad x \in S^K,
	\end{equation}
	using the forward GKEs above. The system of equations given by \eqref{eq.GKE} is precisely the self-consistent $K$-dimensional nonlinear system of IDEs stated in the theorem. 
	\subsection{Propagation of chaos}
	We complete the proof of Theorem~\ref{thm_convergence_Kstates} by proving the propagation of chaos property. In Section~\ref{proof:Convergence}, we showed an almost sure convergence of $X^i$ towards the coupled mean-field process $\bar{X}^i(\cdot,r_i)$ introduced in the proof. For $i\neq j$, because the initial conditions $\xi^i(r)$ and $\xi^j(r)$ are independent (assumption $H_{IC}$) and for any $(x,x',y,y')$ elements of $S^K$, the Poisson processes $\mathcal{N}^{i}_{x,y}$ and $\mathcal{N}^{j}_{x',y'}$ are independent, the processes $\bar{X}^i(\cdot,r_i)$ and $\bar{X}^j(\cdot,r_j)$ are independent. Similarly, for any sequence of $p$ distinct indices $(i_1,\dots,i_p)$, the processes $\bar{X}^{i_1}(t,r_{i_1}),\dots, \bar{X}^{i_p}(t,r_{i_p})$ are mutually independent, and the collection of processes $(X^{i_1},\dots,X^{i_p})$ converge almost surely towards $\left(\bar{X}^{i_1}(\cdot,r_{i_1}),\dots, \bar{X}^{i_p}(\cdot,r_{i_p})\right)$, completing the proof.
	
	\newpage
	\appendix
	\section{Numerical Methods \& Codes}\label{sec.appendix}

	In this appendix we describe in more detail the theory and numerical methods behind the figures in the main text. All code needed to reproduce the figures is available on \href{https://github.com/Touboul-Lab/spatial_models_Staver_Levin}{github.com/Touboul-Lab} and was run on MATLAB version R2019B. In figures where the bifurcation diagram of an ODE was overlaid on the result of a simulation, the bifurcation diagram was computed in AUTO~\cite{ermentrout2002simulating} and the axes aligned in Inkscape.
	\subsection{Numerical parameters}
	For all numerical calculations and simulations we take the following smooth functional forms for the fire threshold functions $\phi$ and $\omega$:
	\[
	\omega(x) = \omega_0 + \frac{\omega_1-\omega_0}{1 + e^{-(x-\theta_1)/s_1}},\quad \quad \phi(x) = \phi_0 + \frac{\phi_1-\phi_0}{1 + e^{-(x-\theta_2)/s_2}} \quad \mbox{for } x\in[0,1].
	\]
	with parameter values as given in Table \ref{table.sigmoids} below.
	\begin{table}[H]
		\centering
		\begin{tabular}{lcccccccccc}
			Parameter & $\mu$ & $\nu$ & $\omega_0$ & $\omega_1$ & $t_1$ & $s_1$ & $\phi_0$ & $\phi_1$ & $t_2$ & $s_2$ \\ \hline
			Value & 0.1 & 0.05   & 0.9        & 0.4        & 0.4   & 0.01  & 0.1      & 0.9      & 0.4   & 0.05 
		\end{tabular}
		\label{table.sigmoids}
	\end{table}
	\subsection{Figure \ref{two_state_expected_bifurcations}}
	
	Figure \ref{two_state_expected_bifurcations} shows the results of simulations of the macroscale interacting particle system model for different values of the seed dispersal intensity $\bar{J}$ across a range of initial conditions. There is a single patch $M=1$ and $N = 3000$ sites in each simulation and we used the Gillespie algorithm to simulate transition times. For each value of $\bar{J}$ shown, we ran $20$ simulations with the initial grass/forest proportions randomly assigned with a weighting which we varied from $0$ to $1$ to observe a full range of initial conditions. Each blue starred point is the proportion of grass in the particle system at time $t=100$. The red curves are the bifurcation diagram in $\bar{J}$ of the ODE, i.e. the GKEs of the mean-field limit:
	\begin{align*}
	\frac{d}{dt}P_G(t) &= \left(1-P_G(t)\right) \phi\left( P_G(t) \right) - \bar{J}\, P_G(t) \left(1 - P_G(t)\right)
	\end{align*}
	with $P_G(t)$ the probability of any given site in the patch to be covered by grass at time $t$. Solid red lines are stable equilibrium curves and dashed red lines denote unstable equilibria; solid red dots mark the bifurcation points.
	
	When multiple distinct states were observed in the particle system at time $t=100$ we recorded the proportions of grass in the initial conditions and plotted with a solid blue dot the lowest proportion whose corresponding simulation ended up on the upper branch of the bifurcation diagram. Similarly, the highest initial proportion which resulted in a time $t=100$ state on the lower branch is recorded with a solid green dot. Tracking which initial conditions result in which end states gives an idea of the ``basin of attraction'' for the transient states of the particle system - we see a close correspondence between this informal ``stochastic attraction basin'' and the basin of attraction of the stable states of the GKEs.
	
	\subsection{Figure \ref{fig.qsd}} Panels A1 and A2 are obtained by directly computing the principal eigenvalue and corresponding normalized eigenvector of the transition intensity matrix $Q$ of the irreducible part of the Markov chain (i.e. the finite-size model).
	
	\subsection{Figure \ref{fig.periodic_IPS} }
	Figure \ref{fig.periodic_IPS} shows a comparison between the macroscale particle system model with $M=1$ patch and $N=3000$ sites, and the corresponding GKEs of the mean-field limit, i.e.
	\begin{align*}
	\dot{G} &= \mu S + \nu T + \phi(G)F - \bar{J}\, G F - \beta\, G T, \\
	\dot{S} &= -\mu S - \omega(G) S - \bar{J}\, S F + \beta\, G T,\\
	\dot{T} &= -\nu T + \omega(G) S - \bar{J}\, T F, \\
	\dot{F} &= \bar{J}\, (G + S + T)F - \phi(G)F, \\
	1 &= G + S + T + F.
	\end{align*}
	In panel A1, we show the solution to the GKEs versus time - the GKEs are a the system of ODEs and are solved using an explicit Euler method with step size $0.01$; parameters are as in Table \ref{table.sigmoids} with $\bar{J} = 0.25$ and $\beta = 0.4$. Panel C1 also shows solutions of the GKEs solved via the Euler method but now $\beta=0.4$ is fixed while $\bar{J}$ is varied to show solutions approaching a heteroclinic cycle in the phase space. 
	
	Panels A2, B and C2 show the results of direct simulations of the four species macroscale particle system using the Gillespie algorithm to simulate transition times. We have $M=1$ patch and $N=3000$ sites with $\bar{J}$ and $\beta$ as in panel $A1$. Panel A2 shows the evolution of the proportions of sites in each state while panel B shows the full solution with the state of each site at each time recorded - the sites are essentially in a synchronized oscillation. Panel C2 shows multiple simulations of the macroscale particle system with the value of $\bar{J}$ varying between simulations and $\beta = 0.4$ fixed.	
	
	\subsection{Figure \ref{fig.maxwell}}
	
	Figure \ref{fig.maxwell} presents simulations of the mesoscale finite-size model and the corresponding GKEs on the periodic spatial domain $\Gamma = \mathcal{S}_5$ (represented by the interval $[0,\,5]$ with the endpoints identified). 
	
	For panels A1, A2 and A3 we directly simulated the finite-size system using the Gillespie algorithm. Panels A4, A5 and A6 are solutions of the GKEs of the mean-field limit. In particular, we solve the nonlinear integro-differential equation
	\begin{multline}\label{eq.KE_appendix}
	\frac{\partial}{\partial t}G(t,r) =  \left(1 - G(t,r)\right) \,\phi\left( \int_{\Gamma} W(r',r)\,G(t,r')\, dr' \right) \\ - G(t,r) \,\left(1 - \int_{\Gamma} J(r',r)\, G(t,r') \,dr' \right), \quad (t, r)\in \mathbb{R}_+ \times \Gamma,
	\end{multline}
	with periodic boundary conditions and kernels as described in Section \ref{sec.waves}. We solve equation \eqref{eq.KE_appendix} by discretizing time with an explicit Euler scheme to obtain
	\begin{equation}
	\begin{split}\label{eq.KE_appendix_euler}
	G(n+1,r) = G(n,r) +  h \left\{ \left(1 - G(n,r)\right) \,\phi\left( \int_{\Gamma} W(r',r)\,G(n,r')\, dr' \right) \right. \\
	\left. - G(n,r) \,\left(1 - \int_{\Gamma} J(r',r)\, G(n,r') \,dr' \right) \vphantom{\left(1 - G(n,r)\right) \,\phi\left( \int_{\Gamma} W(r',r)\,G(n,r')\, dr' \right)} \right\},\quad (n, r)\in \mathbb{Z}^+ \times \Gamma.
	\end{split}
	\end{equation}
	We then discretize the integrals using the 1D trapezoidal rule and approximate the solution on the evenly spaced grid $\{0,\Delta,2\Delta,\dots,5\}$ for some $\Delta>0$. In practice, we found that a step size $h<0.1$ and $200$ spatial grid points was sufficient to ensure numerical stability of the scheme, and the scheme remained stable as we decreased the time step and increased the number of grid points. 
	
	In part B of Figure \ref{fig.maxwell} we compare the results of simulations of the mesoscale particle system and the corresponding generalized Kolmogorov equations with a \emph{nonuniform initial site distribution}. Panel B1 is a space-time plot of the result from a direct simulation of the mesoscale particle system with $N = 2000$ sites whose initial positions are drawn according to the distribution
	\begin{equation}\label{eq.init_dist_appendix}
	dq(x)=\left(a+b\,x\right)\mathbbm{1}_{[0,L]}(x)\,dx,\quad a,b>0, \quad x \in [0,1]
	\end{equation}
	with $a=0.4$ and $b=1.2$ for this particular example. The spatial domain $\Gamma$ is now the interval $[0,1]$, and the dispersal kernels are standard Gaussians and of convolution type, i.e.
	\[
	W(r,r') = \frac{J(r,r')}{\bar{J}} = \frac{1}{\sigma\sqrt{2\pi}}e^{\frac{-(r-r')^2}{2\sigma^2}},\quad r,r' \in \mathbb{R}.
	\]
	We take $\bar{J} = 1.1$ in these simulations. In contrast to panel A, which has periodic boundary conditions, the boundary conditions for this simulation are ``reflecting'' in the following sense: consider a function $u$ defined on $[0,L]$. First extend $u$ from a function on $[0,L]$ to $[-L,L]$ by reflection:
	\[
	u_{R}(x,t) = \begin{cases}
	u(x,t), \quad &x\in[0,L],\\
	u(-x,t), &x\in[-L,0].
	\end{cases}
	\]
	Now let $\tilde{u}$ be the standard $2L$ periodic extension of $u_R$ (as defined above) so that $\tilde{u}$ is defined on all of $\mathbb{R}$. When the system has a heterogeneous spatial structure as in this example, since sites are less densely packed close to zero and more densely packed situated closer to $L$, a periodic extension introduces unrealistic boundary effects. In particular, periodic boundaries put dense regions adjacent to sparse sites regions while the reflecting boundary makes sure that sparse regions neighbor sparse regions at the boundaries. When dispersal kernels are sufficiently local, this avoids unintuitive and physically unrealistic solutions with significant boundary effects. For the simulation of the particle system, we once more use the Gillespie algorithm to simulate transition times.
	
	Panel B2 of Figure \ref{fig.maxwell} is a space-time plot of the solution of the GKEs for the mean-field mesoscale system with initial site distribution given by \eqref{eq.init_dist_appendix}. The integro-differential equations to be solved are given by
	\begin{multline}\label{eq.KE_NU_appendix}
	\frac{\partial}{\partial t}G(t,r) =  \left(1 - G(t,r)\right) \,\phi\left( \int_{\Gamma}(a + b\,r') \,W(r-r')\,G(t,r')\, dr' \right) \\ - G(t,r) \,\left(1 - \int_{\Gamma}(a + b\,r')\, J(r-r')\, G(t,r') \,dr' \right), \quad (t, r)\in \mathbb{R}_+ \times \Gamma,
	\end{multline}
	with reflecting boundary conditions as outlined above. We solved \eqref{eq.KE_NU_appendix} numerically using the same Euler time discretization and trapezoidal rule spatial discretization as before with similar discretization parameters giving good numerical stability of the scheme.
	
	Panel B3 has three components overlaid on the same axes: the time $t=500$ state of the particle system from B1 (solid blue), the time $t=500$ state of the GKE solution from B2 (dashed black line) and the codimension-1 bifurcation diagram in $r$ of the ODE 
	\begin{equation}\label{eq.ODE_limit_appendix}
	\frac{d}{dt}G(t) = \left( 1 - G(t) \right)(a + b\,r)\,G(t) - G(t) \left( 1 - (a + b\,r)\,\bar{J}\,G(t) \right).
	\end{equation}
	The ODE given by \eqref{eq.ODE_limit_appendix} is obtained from the GKE \eqref{eq.KE_NU_appendix} by formally letting $\sigma\to0^+$ in both the fire and dispersal kernels in order to study the case of no spatial interaction between sites. Hence \eqref{eq.ODE_limit_appendix} can be considered as the natural zero dispersal limit of the system with Gaussian kernels. The bifurcation diagram for \eqref{eq.ODE_limit_appendix} is overlaid with solid red lines for stable equilibria and unstable equilibria denoted by dashed red lines.

	\bibliographystyle{siamplain}
	\bibliography{prob_foundations}
	
\end{document}